\newtoks\prt
\numberwithin{equation}{section}
\newtheorem{thm}{Theorem}[section]
\newtheorem{lemma}[thm]{Lemma}
\newtheorem{cor}[thm]{Corollary}
\theoremstyle{definition}
\newtheorem{remark}[thm]{Remark}
\def\eqn#1$$#2$${\begin{equation}\label#1#2\end{equation}}
\def\fra{\mathfrak{A}}
\def\C{\mathcal C}
\def\F{\mathcal F}
\def\M{\mathcal M}
\def\ce{\mathbb C}
\def\lin{Lindel\"of}
\def\co{\operatorname{co}}
\def\ep{\varepsilon}
\def\en{\mathbb N}
\def\ef{\mathbb F}
\def\er{\mathbb R}
\def\r{|}
\def \aff{\operatorname{Aff}}
\def\ov{\overline}
\def \lin {\operatorname{Lin}}
\def \Bos {\operatorname{Bos}}
\def \Hs {\operatorname{Hs}}
\def \Bas {\operatorname{Bas}}
\def \id {\operatorname{id}}
\def \ext {\operatorname{ext}}
\def \reg {\partial _{\kern1pt\text{reg}}}
\def\di{\,\mbox{\rm d}}
\newcommand{\norm}[1]{\left\|#1\right\|}
\renewcommand{\Re}{\operatorname{Re}}
\renewcommand{\Im}{\operatorname{Im}}
\newcommand{\setsep}{;\,}
\newcommand{\fr}{Fr\'echet\ }
\begin{document}

\title{Preserving affine Baire classes by perfect affine maps}

\author{Ond\v{r}ej F.K. Kalenda and Ji\v r\'\i\ Spurn\'y}

\address{ Charles University in Prague\\
Faculty of Mathematics and Physics\\Department of Mathematical Analysis \\
Sokolovsk\'{a} 83, 186 \ 75\\Praha 8, Czech Republic}
\email{kalenda@karlin.mff.cuni.cz}
\email{spurny@karlin.mff.cuni.cz}

\subjclass[2010]{46A55; 26A21; 54H05}

\keywords{vector-valued Baire function; strongly affine function}

\thanks{Our investigation was supported by the Research grant GA\v{C}R P201/12/0290. The second author was also
supported by The Foundation of Karel Jane\v{c}ek for Science and Research.}

\begin{abstract}
Let $\varphi\colon X\to Y$ be an affine continuous surjection between compact convex sets. Suppose that the canonical copy of the space of real-valued affine continuous functions on $Y$ in the space of real-valued affine continuous functions on $X$ is complemented. We show that if  $F$ is a topological vector space, then $f\colon Y\to F$ is of affine Baire class $\alpha$ whenever the composition $f\circ\varphi$ is of affine Baire class $\alpha$. This abstract result is applied to extend
known results on affine Baire classes of strongly affine Baire mappings.
\end{abstract}

\maketitle

\section{Introduction and the main results}

Let $\varphi\colon X\to Y$ be a continuous surjection between compact Hausdorff spaces. If $f\colon Y\to T$ is a mapping with values in a topological space $T$, some properties of $f$ can be deduced from the properties of $f\circ\varphi$. In particular,
since $\varphi$ is a closed mapping, it is easy to check that $f$ is continuous if and only if $f\circ \varphi$ is continuous.
Analogous statements hold for Borel measurable, Baire measurable and resolvably measurable mappings due to \cite{hosp}. We formulate them in the following theorem where we use the notation from \cite{spurny-amh}.

\smallskip
\noindent{\bfseries Theorem A.} {\itshape  Let $\varphi\colon X\to Y$ be a continuous surjection between compact Hausdorff spaces, $f\colon Y\to T$ be a mapping with values in a topological space $T$ and $\alpha<\omega_1$.
\begin{itemize}
	\item[(1)] $f$ is Borel measurable if (and only if) $f\circ\varphi$ is Borel measurable. Moreover, $f$ is $\Sigma_\alpha(\Bos(Y))$-measurable if (and only if) $f\circ\varphi$ is $\Sigma_\alpha(\Bos(X))$-measurable.
 	\item[(2)] $f$ is resolvably measurable if (and only if) $f\circ\varphi$ is resolvably measurable.
 	Moreover, $f$ is $\Sigma_\alpha(\Hs(Y))$-measurable if (and only if) $f\circ\varphi$ is $\Sigma_\alpha(\Hs(X))$-measurable.
	\item[(3)] $f$ is Baire measurable if (and only if) $f\circ\varphi$ is Baire measurable.
	Moreover, $f$ is $\Sigma_\alpha(\Bas(Y))$-measurable if (and only if) $f\circ\varphi$ is $\Sigma_\alpha(\Bas(X))$-measurable.
\end{itemize}
}\smallskip

Let us explain it in more detail. The notation and terminology follow \cite{spurny-amh}. A mapping $f\colon Y\to T$ is \emph{Borel measurable} if the inverse image of any open set is a Borel set.  Similarly, $f$ is \emph{Baire measurable} if the inverse image of any open set belongs to the Baire $\sigma$-algebra (i.e., to the $\sigma$-algebra generated by cozero sets); and $f$ is \emph{resolvably measurable} if the inverse image of any open set belongs to the $\sigma$-algebra generated by resolvable sets.
Further, $f$ is $\Sigma_\alpha(\Bos(Y))$-measurable if $f^{-1}(U)\in\Sigma_\alpha(\Bos(Y))$ for any open set $U\subset T$. Similarly we define $\Sigma_\alpha(\Bas(Y))$-measurable and $\Sigma_\alpha(\Hs(Y))$-measurable mappings.

The `only if' parts of all the three assertions of Theorem A are obvious.
The `if part' of the assertion (1) follows from \cite[Theorem 10]{hosp} and that of the assertion (2) follows from \cite[Theorem 4]{hosp}. To show the `if part' of the assertion (3) we first observe that by \cite[Theorem 2]{frolik-bulams} the mapping $f$ is Baire measurable whenever $f\circ \varphi$ is Baire measurable and we conclude by using  \cite[Theorem 3.6]{spurny-amh} saying that a mapping is  $\Sigma_\alpha(\Bas(Y))$-measurable if and only if it is Baire measurable and $\Sigma_\alpha(\Bos(Y))$-measurable.

If $T$ is a convex subset of a \fr space, the hierarchy of Baire measurable mappings corresponds to the hierarchy of Baire functions. Since we will need more such hierarchies, we will introduce it in an abstract setting:

Given a set $S$, a topological space $T$ and a family of mappings $\F$ from $S$ to $T$, we define the \emph{Baire classes} of mappings as follows. Let $(\F)_0=\F$. Assuming  that $\alpha\in [1,\omega_1)$ is given and that $(\F)_\beta$ have
been already defined for each $\beta<\alpha$, we set
\begin{multline*}
(\F)_\alpha=\{f\colon S\to T\setsep \text{there exists a sequence } (f_n)\text{ in }\bigcup_{\beta<\alpha} (\F)_\beta
\\ \text{ such that }f_n\to f
\mbox{ pointwise}\}.
\end{multline*}

In particular, if $S$ and $T$ are topological spaces, by $\C_\alpha(S,T)$ we denote the set $(\C(S,T))_\alpha$, where $\C(S,T)$ is the set of all continuous functions from $S$ to $T$. Further, by $\C_\alpha(S)$ we mean $\C_\alpha(S,\er)$.

It is known (see, e.g. \cite[Lemma 3.2]{l1pred}) that, whenever $Y$ is a compact space and $T$ is a convex subset of a \fr space, then a mapping $f:Y\to T$ belongs to $\C_\alpha(Y,T)$ if and only if it is  $\Sigma_{\alpha+1}(\Bas(Y))$-measurable.
Therefore we have the following equivalence:

\smallskip{\noindent \bfseries Theorem B.} {\itshape
Let $\varphi:X\to Y$ be a continuous surjection between compact spaces and $T$ be a convex subset of a \fr space. Let $f\colon Y\to T$ be a mapping and $\alpha<\omega_1$. Then $f\in\C_\alpha(Y,T)$ if (and only if) $f\circ\varphi\in\C_\alpha(X,T)$.}
\smallskip

Now, suppose that $X$ and $Y$ are, moreover, compact convex sets (i.e., compact convex subsets of locally convex spaces) and the continuous surjection $\varphi$ is affine. We address the following question:

\begin{center}\it In this setting, does an analogue of Theorem B hold for affine Baire classes?
\end{center}

Let us first recall the definition of affine Baire classes:
If $Y$ is a compact convex set and $T$ is a convex subset of a topological vector space, by $\fra(Y,T)$ we denote the set of all affine continuous functions defined on $Y$ with values in $T$ and, for $\alpha<\omega_1$ we set $\fra_\alpha(Y,T)=(\fra(Y,T))_\alpha$. Further, $\fra(Y)$ stands for the space $\fra(Y,\er)$ and $\fra_\alpha(Y)$ means $\fra_\alpha(Y,\er)$.

Hence, the precise question we address is the following:

\begin{center}
\it	Assuming that $f\circ\varphi\in\fra_\alpha(X,T)$, is necessarily $f\in\fra_\alpha(Y,T)$?
\end{center}

It is clear that $f$ is affine whenever $f\circ\varphi$ is affine. Therefore the answer is positive in case $\alpha=0$.
It is further positive if $T=\er$ and $\alpha=1$, since in this case the class $\fra_1(Y,\er)$ coincides with the class of all affine functions belonging to $\C_1(Y,\er)$ by a result of Mokobodzki \cite[Theorem 4.24]{lmns}. However, the answer is negative in general. More precisely, for $\alpha\ge2$ it is negative even for scalar functions and for $\alpha=1$ it is negative for vector-valued functions.

Let us explain it more detail. Let $Y$ be an arbitrary compact convex set. If $\mu$ is a Radon probability measure on $Y$,
then there is a unique point $x\in Y$ such that $u(x)=\int u\di\mu$ for any $u\in\fra(Y)$. This unique point is called the \emph{barycenter} of $\mu$ and is denoted by $r(\mu)$ (see \cite[Definition 2.26]{lmns}). Further, a function $f\colon Y\to\er$ is called \emph{strongly affine} if for each Radon probability $\mu$ on $X$ $f$ is  $\mu$-integrable and $\int f\di\mu=f(r(\mu))$. The same definition can be used for $f\colon Y\to F$, where $F$ is a \fr space (see \cite{l1pred}).
Since the vector integral used in the definition is the Pettis one, such $f$ is strongly affine if and only if $\tau\circ f$ is strongly affine for each $\tau\in F^*$ (see \cite[Fact 1.2]{l1pred}). We will use this characterization as a definition.

If we set $X=\M^1(Y)$, the set of all Radon probability measures on $Y$ equipped with the weak$^*$ topology, then $X$ is again a compact convex set and, by \cite[Proposition 2.38]{lmns} the mapping $r\colon\mu\mapsto r(\mu)$ is a continuous affine surjection. Moreover, by \cite[Proposition 6.38]{lmns} the set $X$ is a Bauer simplex (i.e., a Choquet simplex with closed set of extreme points), therefore by \cite[Theorem 2.5]{l1pred} any strongly affine map $g:X\to F$ with values in a \fr space $F$ which belongs to $\C_\alpha(X,F)$ in fact belongs to $\fra_\alpha(X,F)$.

Now, by \cite{talagrand} there is a compact convex set $Y$ and a strongly affine function $f\in\C_2(Y)$ such that $f\notin\bigcup_{\alpha<\omega_1}\fra_\alpha(Y)$. If we choose $X$ and $r$ as in the previous paragraph, then $f\circ r\in\fra_2(X)$. Similarly, by \cite[Theorem 2.2]{l1pred} there is a compact convex set $Y$, a Banach space $F$ and a strongly affine $f\in\C_1(Y,F)$ such that $f\notin\bigcup_{\alpha<\omega_1}\fra_\alpha(Y,F)$. If we choose $X$ and $r$ as above, we get $f\circ r\in\fra_1(X,F)$.

The main result of the present paper is a sufficient condition for a positive answer.	To formulate it we need the following notation. Let $\varphi\colon X\to Y$ be a continuous affine surjection between compact convex sets. We define a mapping $\varphi^*\colon\fra(Y)\to\fra(X)$ by $\varphi^*(f)=f\circ \varphi$. Then $\varphi^*$ is an isometric embedding of $\fra(Y)$ into $\fra(X)$.

\begin{thm}\label{T:main} Let $X$ and $Y$ be compact convex sets, $\varphi\colon X\to Y$ a continuous affine surjection such that $\varphi^*(\fra(Y))$ is a complemented subspace of $\fra(X)$. Let $F$ be a topological vector space. Let $f:Y\to F$ be such that $f\circ \varphi\in\fra_\alpha(X,F)$ for some $\alpha<\omega_1$. Then $f\in\fra_\alpha(X,F)$.
\end{thm}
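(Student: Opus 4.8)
My plan is to reduce the whole statement to the construction of a single linear \emph{transfer operator} pushing functions on $X$ down to $Y$ while preserving affine Baire classes. Writing $Q=(\varphi^*)^{-1}\circ P$, where $P\colon\fra(X)\to\fra(X)$ is the assumed bounded projection onto $\varphi^*(\fra(Y))$, one obtains a bounded linear map $Q\colon\fra(X)\to\fra(Y)$ with the crucial reproducing identity $Q(g\circ\varphi)=g$ for every $g\in\fra(Y)$ (because $P$ fixes its range and $\varphi^*$ is an isometry onto it). If I can extend $Q$ to an operator $\Phi$ defined on all affine Baire functions on $X$ so that (i) $\Phi$ maps $\fra_\beta(X,F)$ into $\fra_\beta(Y,F)$ for every $\beta<\omega_1$, and (ii) the identity $\Phi(g\circ\varphi)=g$ persists for $g\in\fra_\beta(Y,F)$, then the theorem is immediate: from $f\circ\varphi\in\fra_\alpha(X,F)$ and (i) I get $\Phi(f\circ\varphi)\in\fra_\alpha(Y,F)$, while (ii) identifies $\Phi(f\circ\varphi)=f$. (Here $f$ is automatically affine, since $f\circ\varphi$ is and $\varphi$ is onto.) So all the content lies in building $\Phi$.

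The natural candidate is integration against representing measures. The point evaluations $y\mapsto\varepsilon_y$ embed $Y$ affinely and continuously into $(\fra(Y)^*,w^*)$, so $y\mapsto Q^*\varepsilon_y$ is a weak$^*$-continuous affine map of $Y$ into $\fra(X)^*$ with $\langle Q^*\varepsilon_y,h\rangle=Q(h)(y)$. Extending each functional $Q^*\varepsilon_y$ from $\fra(X)$ to $C(X)$ by Hahn--Banach yields a signed Radon measure $\mu_y$ on $X$ with $\|\mu_y\|\le\|P\|$ and $Q(h)(y)=\int_X h\di\mu_y$ for $h\in\fra(X)$; I then \emph{define} $\Phi(h)(y)=\int_X h\di\mu_y$ for scalar affine Baire $h$, and, in the vector case, $\Phi(H)(y)=\int_X H\di\mu_y$ as a Pettis integral (equivalently the barycenter of the pushed-forward measure, which in the continuous case lives in the compact convex set $H(X)\subseteq F$). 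Two points need checking. First, independence of the Hahn--Banach lift: any two lifts differ by a measure in $\fra(X)^\perp$, i.e.\ a measure representing zero, and such measures annihilate every bounded affine Baire function, since those functions are strongly affine while two balanced representing measures share mass and barycenter; thus $\int_X h\di\mu_y$ does not depend on the chosen $\mu_y$. Second, the reproducing identity at higher levels, which I would obtain together with (i) in a single transfinite induction.

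The induction establishing (i) runs on $\beta$, the base case being the definition of $Q$ (resp.\ the barycenter). At a limit stage I write $h=\lim_n h_n$ with $h_n\in\bigcup_{\gamma<\beta}\fra_\gamma(X,F)$; since each $\mu_y$ is a fixed finite measure, dominated convergence gives $\Phi(h)(y)=\lim_n\int_X h_n\di\mu_y=\lim_n\Phi(h_n)(y)$ pointwise on $Y$, and the inductive hypothesis places each $\Phi(h_n)$ in some $\fra_\gamma(Y,F)$ with $\gamma<\beta$, so the pointwise limit $\Phi(h)$ lies in $\fra_\beta(Y,F)$; affineness is inherited automatically through the limits. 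The very same computation, run with $h=g\circ\varphi$ and $h_n=g_n\circ\varphi$ where $g=\lim_n g_n$, propagates $\Phi(g\circ\varphi)=g$. For a general topological vector space $F$ the limit step is read coordinatewise through $F^*$, falling back on the scalar transfer, when $F^*$ separates points, and otherwise through the vector-valued dominated convergence directly.

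I expect the genuine obstacle to be precisely this passage to the limit under the integral. Dominated convergence requires the approximating sequence $(h_n)$ to be uniformly bounded, yet a bounded affine Baire function cannot be truncated without destroying affineness, so the existence of uniformly bounded affine approximants of lower class is not formal; showing that bounded functions in $\fra_\beta$ admit such approximations (equivalently, controlling the integrability of unbounded affine Baire functions against the $\mu_y$ through their strong affineness) is the technical heart, and the place where the structure of affine Baire classes, rather than plain Baire classes, is really felt. A secondary difficulty, specific to a general $F$ with possibly small dual, is ensuring that the vector integrals exist in $F$ and that the resulting convergence is in the original topology of $F$ and not merely weak; this is where the standing hypotheses on $F$ (or a prior reduction to the bounded scalar case) must be invoked.
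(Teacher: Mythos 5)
Your global strategy---the transfer operator $Q=(\varphi^*)^{-1}\circ P$ with the reproducing identity $Q(g\circ\varphi)=g$, extended to higher affine classes and transported through $y\mapsto P^*\ep_y$---is exactly the skeleton of the paper's proof. But your realization of the extension $\Phi$ by integration against Hahn--Banach lifts $\mu_y$ breaks down at precisely the two places you flag at the end, and given the hypotheses these are not deferrable technicalities: $F$ is an \emph{arbitrary} topological vector space, possibly non-locally-convex with $F^*=\{0\}$ (think $L^p$ with $0<p<1$), so the Pettis integral $\int_X H\di\mu_y$ is in general undefined, the ``coordinatewise through $F^*$'' reading is vacuous, and there are no standing hypotheses on $F$ to invoke. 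Likewise your limit step rests on dominated convergence, hence on uniformly bounded affine approximants of lower class, and your well-definedness argument rests on strong affineness of bounded affine Baire functions---neither is established in the proposal (the first can in fact be salvaged in the scalar case by a Banach--Steinhaus argument, since pointwise convergence on $X$ controls norms via $\fra(X)^*$, but ``bounded'' and ``dominated'' are not even meaningful constraints for maps into a general TVS). So as written the proposal could at best yield the scalar or Fr\'echet-valued case, not the theorem.

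The missing idea, which is how the paper proceeds, is that no integration is needed at all: by \cite[Proposition 4.31]{lmns} every $\eta\in\fra(X)^*$ decomposes as $\eta=a_1\ep_{x_1}-a_2\ep_{x_2}$ with $x_1,x_2\in X$, $a_1,a_2\ge0$, $a_1+a_2=\norm{\eta}$. Hence any affine $f\colon X\to F$ linearizes to $L_f\colon\fra(X)^*\to F$ by the \emph{finite} formula $L_f(\eta)=a_1f(x_1)-a_2f(x_2)$, which is well defined purely because $\ep$ and $f$ are affine---no representing measures, no integrability, no boundedness, no hypotheses on $F$. Your problematic limit passage then becomes trivial: if $f_n\to f$ pointwise on $X$, then $L_{f_n}(\eta)\to L_f(\eta)$ since each value involves only two points of $X$; the base case (continuity of $f$ implies weak$^*$-continuity of $L_f$ on $B_{\fra(X)^*}$) is obtained by factoring $L_f$ through the continuous surjection $\psi(x_1,x_2,a_1,a_2)=a_1\ep_{x_1}-a_2\ep_{x_2}$ from the compact set $K=X\times X\times\{(a_1,a_2)\setsep a_1,a_2\ge0,\ a_1+a_2\le1\}$ onto the ball. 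Setting $Qf(y)=L_f(P^*\ep_y)$, the class preservation follows since $P^*\circ\ep$ is weak$^*$-continuous into $\norm{P}B_{\fra(X)^*}$, and the reproducing identity $Q(g\circ\varphi)=g$ is the direct computation you envisaged, using $\varphi^{**}\circ P^*=(P\circ\varphi^*)^*=\id_{\fra(Y)^*}$ and $\varphi^{**}(\ep_x)=\ep_{\varphi(x)}$. In short, replacing your representing measures by the two-atom decomposition of functionals removes both obstacles you correctly identified and is what makes the theorem true at the stated generality.
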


We point out that this result is quite abstract and that the range space $F$ is just a topological vector space -- no local convexity, metrizability or completeness is required. The proof is given in the next section and is rather elementary.
Of course, the most interesting case is that of \fr range. In this setting we get the following corollary.

\begin{thm}\label{T:appl}
Let either
\begin{itemize}
	\item[(a)] $X=(B_{E^*},w^*)$, where $E$ is a (real or complex) Banach space which is isomorphic to a complemented subspace of a (real or complex) $L_1$-predual $E_1$, or
	\item[(b)] $X$ is a compact convex set such that $\fra(X,\ef)$ is isomorphic to a complemented subspace of an $L_1$-predual $E_1$ over $\ef$.
\end{itemize}
 If $F$ is a \fr space, then any strongly affine $f\in\C_\alpha(X,F)$ belongs to $\fra_{1+\alpha}(X,F)$.
If $\ext B_{E_1^*}$ is moreover $F_\sigma$ in the weak$^*$ topology, $1+\alpha$ can be replaced by $\alpha$.
\end{thm}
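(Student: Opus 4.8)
The plan is to reduce Theorem~\ref{T:appl} to the abstract Theorem~\ref{T:main} together with the already-known version of the statement for $L_1$-preduals themselves, i.e. the result we are extending. Both hypotheses are arranged so that $\fra(X,\ef)$ ends up as a complemented subspace of $\fra(\widetilde{X})$ for a suitable larger compact convex set $\widetilde{X}$ carrying an affine continuous surjection $\varphi\colon\widetilde{X}\to X$ on which the known $L_1$-predual result applies. Theorem~\ref{T:main}, read with $\widetilde{X}$ in the role of its $X$ and our $X$ in the role of its $Y$, then transports membership in $\fra_\beta(\widetilde{X},F)$ down to $\fra_\beta(X,F)$.

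First I would treat case (a), the transparent model. Put $\widetilde{X}=(B_{E_1^*},w^*)$ and let $\varphi=\iota^*\colon\widetilde{X}\to X=(B_{E^*},w^*)$ be the adjoint of the inclusion $\iota\colon E\hookrightarrow E_1$, that is the restriction map $x_1^*\mapsto x_1^*|_E$. It is weak$^*$-to-weak$^*$ continuous and affine, and it is onto by Hahn--Banach. Identifying (for real scalars) $\fra(B_{E^*})\cong E\oplus\er$ and $\fra(B_{E_1^*})\cong E_1\oplus\er$, where each affine continuous function splits into a weak$^*$-continuous linear part and a constant, the induced map $\varphi^*$ becomes $\iota\oplus\id$, so that $\varphi^*(\fra(X))$ corresponds to $\iota(E)\oplus\er$. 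Dualizing a bounded projection $P\colon E_1\to E$ with $P\iota=\id_E$ produces a bounded projection of $\fra(\widetilde{X})$ onto $\varphi^*(\fra(X))$; hence $\varphi^*(\fra(X))$ is complemented in $\fra(\widetilde{X})$, exactly as required by Theorem~\ref{T:main}.

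Next I would check that composition with $\varphi$ behaves well. Given a strongly affine $f\in\C_\alpha(X,F)$, set $g=f\circ\varphi$. Since $\varphi$ is continuous, a routine transfinite induction on the pointwise-limit hierarchy gives $g\in\C_\alpha(\widetilde{X},F)$. Moreover $g$ is strongly affine: for $\nu\in\M^1(\widetilde{X})$ the image measure $\varphi_{\#}\nu$ has barycenter $\varphi(r(\nu))$ because $\varphi$ is affine continuous, and the change-of-variables formula together with strong affinity of $f$, tested against $\tau\in F^*$ via the functional characterization recalled in the introduction, yields $\int g\,d\nu=g(r(\nu))$. Now the known $L_1$-predual case applied on $\widetilde{X}=(B_{E_1^*},w^*)$ gives $g\in\fra_{1+\alpha}(\widetilde{X},F)$, and $g\in\fra_\alpha(\widetilde{X},F)$ when $\ext B_{E_1^*}$ is $F_\sigma$. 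Feeding $g=f\circ\varphi$ into Theorem~\ref{T:main} with the class $1+\alpha$ (respectively $\alpha$) then delivers $f\in\fra_{1+\alpha}(X,F)$ (respectively $f\in\fra_\alpha(X,F)$), which is the assertion.

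For case (b) the same scheme applies once the ambient $L_1$-predual is represented as a space of affine continuous functions: one realizes $E_1$, after adjoining a unit and, in the complex case, after the standard realification, inside $\fra(\widetilde{X})$ for an appropriate $L_1$-predual ball $\widetilde{X}$, so that the given Banach-space complementation of $\fra(X,\ef)$ in $E_1$ becomes a complementation of $\varphi^*(\fra(X))$ in $\fra(\widetilde{X})$. I expect this to be the main obstacle. The hypothesis supplies only a bounded linear projection, whereas to obtain an honest affine surjection $\varphi$ one needs the embedding of $\fra(X,\ef)$ into the ambient function space to be compatible with the unital, order, and scalar structure; reconciling the purely Banach-space complementation with the affine structure, and handling the passage between real and complex scalars so that Theorem~\ref{T:main} applies, is where the real care is needed. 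Once $\varphi$ and the complementation are in place, the verification of the properties of $g$ and the final descent are exactly as in case (a).
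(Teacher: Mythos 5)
Your case (a) is essentially the paper's argument: compose with the restriction surjection of dual balls, apply the known $L_1$-predual result \cite[Theorem 2.5]{l1pred} on $(B_{E_1^*},w^*)$, and descend via Theorem~\ref{T:main} using the complementation of $\pi^*(\fra(B_{E_2^*},w^*))$ in $\fra(B_{E_1^*},w^*)$ (the paper's Lemma~\ref{L:complem}). Two points are glossed over, though both are repairable along your lines. First, $E$ is only \emph{isomorphic} to a complemented subspace $E_2=T(E)\subset E_1$, so your inclusion $\iota\colon E\hookrightarrow E_1$ does not exist as stated and $B_{E^*}$ is not affinely identified with $B_{E_2^*}$; the paper transfers $f$ to $g=\tilde T f$ on $B_{E_2^*}$ through the linear-part-plus-constant decomposition (Lemma~\ref{L:isom}), checking that strong affinity and membership in $\C_\alpha$ and $\fra_\alpha$ survive the transfer. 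Second, your identification $\fra(B_{E^*})\cong E\oplus\er$ fails over $\ce$, where an affine continuous function on the ball has a linear part, a conjugate-linear part and a constant; the paper's Lemma~\ref{L:complem} applies the projection $P$ to both parts and then takes real parts to land in $\pi^*(\fra(B_{E_2^*},w^*))$.

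In case (b) there is a genuine gap, which you have located but not crossed. Your plan --- to convert the Banach-space complementation of $\fra(X,\ef)$ in $E_1$ into a complementation of $\varphi^*(\fra(X))$ in $\fra(\widetilde X)$ for some affine continuous surjection $\varphi\colon\widetilde X\to X$ --- founders on exactly the mismatch you name, and more: the bounded projection need not respect the unit or the order, and, more fundamentally, the known $L_1$-predual result lives on dual \emph{balls} while $X$ is a general compact convex set, so there is no candidate affine surjection from a ball onto $X$ itself (the points of $X$ sit inside $B_{\fra(X,\ef)^*}$ as the normalized positive functionals, a proper face-like subset, not the whole ball). The paper resolves this differently: instead of covering $X$, it \emph{extends} $f$. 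Lemma~\ref{L:sa} shows that for a strongly affine $f\in\C_\alpha(X,F)$ the linearization $L_f$ of Lemma~\ref{L:linearizace}, restricted to $(B_{\fra(X,\ef)^*},w^*)$, is again strongly affine and of class $\C_\alpha$; this is proved by representing $L_f\circ\pi$ on $B_{\M(X,\ef)}$ as the Pettis integral $\mu\mapsto\int_X f\di\mu$, running a transfinite induction with uniformly bounded approximating sequences (using \cite[Theorem 3.7]{l1pred} and boundedness of strongly affine Baire maps), and descending along the restriction surjection $\pi$ by the nonaffine Theorem B. Case (b) is then literally case (a) applied with $E=\fra(X,\ef)$ --- which is exactly the hypothesis --- followed by restricting along the evaluation embedding $\ep\colon X\to B_{\fra(X,\ef)^*}$ to recover $f=L_f\circ\ep$. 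This extension step is the missing idea in your proposal, and it is substantive rather than bookkeeping; once it is in place, the real/complex passage you worry about is routine (for $\ef=\ce$ and real $F$ one complexifies $F$ and composes with the canonical real-linear projection back onto $F$).
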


Recall that a (real or complex) $L_1$-predual is a (real or complex) Banach space whose dual is isometric to a space of the form $L_1(\mu)$ for some non-negative measure $\mu$. The letter $\ef$ stands for $\er$ or $\ce$.

As an immediate consequence we get the following result.

\begin{cor} \
\begin{itemize}
	\item The Banach space $E$ constructed in \cite{talagrand} is not isomorphic to a complemented subspace of an $L_1$-predual.
	\item Let $X$ be any of the simplices constructed in \cite[Theorem 1.1]{spu-zel}, then $\fra(X)$ is not isomorphic to a complemented subspace of a $\C(K)$ space. (In fact, $\fra(X)$ is even not isomorphic to a complemented subspace of $\fra(Y)$, where $Y$ is a simplex with $\ext Y$ being $F_\sigma$.)
\end{itemize}
\end{cor}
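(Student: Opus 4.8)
The plan is to obtain both assertions by contradiction, feeding the known counterexamples recalled in the introduction into Theorem~\ref{T:appl}. For the first assertion, recall that Talagrand's construction in \cite{talagrand} produces a Banach space $E$ for which $Y=(B_{E^*},w^*)$ carries a strongly affine function $f\in\C_2(Y)$ with $f\notin\bigcup_{\alpha<\omega_1}\fra_\alpha(Y)$. Suppose, towards a contradiction, that $E$ were isomorphic to a complemented subspace of an $L_1$-predual. Then hypothesis~(a) of Theorem~\ref{T:appl} holds with $X=(B_{E^*},w^*)$, so every strongly affine member of $\C_\alpha(X,\er)$ would lie in $\fra_{1+\alpha}(X)$; taking $\alpha=2$ forces the Talagrand function into $\fra_3(Y)$, contradicting $f\notin\bigcup_{\alpha}\fra_\alpha(Y)$. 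Hence $E$ is not complemented in any $L_1$-predual.

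For the second assertion I would first record two structural facts: every $\C(K)$ is a real $L_1$-predual, and the extreme points of $B_{\C(K)^*}$ are exactly $\{\pm\delta_k\colon k\in K\}$, a weak$^*$-closed, hence $F_\sigma$, set. Let $X$ be one of the simplices from \cite[Theorem~1.1]{spu-zel}; by construction it carries, for a suitable $\alpha<\omega_1$, a strongly affine $f\in\C_\alpha(X)$ with $f\notin\fra_\alpha(X)$. If $\fra(X)$ were complemented in some $\C(K)$, then hypothesis~(b) of Theorem~\ref{T:appl} applies with $E_1=\C(K)$, and since $\ext B_{\C(K)^*}$ is $F_\sigma$ the refined conclusion places every strongly affine $\C_\alpha(X)$ function in $\fra_\alpha(X)$ -- contradicting the choice of $f$. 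For the parenthetical strengthening I would replace $\C(K)$ by $\fra(Y)$ for a simplex $Y$ with $\ext Y$ being $F_\sigma$: such $\fra(Y)$ is again an $L_1$-predual, and $\ext B_{\fra(Y)^*}=\{\pm\widehat{y}\colon y\in\ext Y\}$ is $F_\sigma$ because $\ext Y$ is, so the same refined conclusion and the same contradiction apply. Since $\C(K)=\fra(\M^1(K))$ is the special case of a Bauer simplex, this genuinely strengthens the $\C(K)$ statement.

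The deductions are short once Theorem~\ref{T:appl} is in hand, so the substance lies in matching hypotheses rather than in new estimates. The two points I would check carefully are that Talagrand's strongly affine function of class $\C_2$ genuinely lives on a set of the form $(B_{E^*},w^*)$, and that the simplices of \cite[Theorem~1.1]{spu-zel} carry a strongly affine function of class $\C_\alpha$ which fails to be of class $\fra_\alpha$ (not merely of class $\fra_{1+\alpha}$). It is this exact-class failure, together with the $F_\sigma$-character of $\ext B_{E_1^*}$ that activates the refined clause of Theorem~\ref{T:appl}, that produces the contradiction in the second assertion; establishing the identity $\ext B_{\fra(Y)^*}=\{\pm\widehat y\colon y\in\ext Y\}$ for a simplex $Y$ is the one geometric fact I would need to justify, the rest being bookkeeping.
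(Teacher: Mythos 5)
Your proposal is correct and follows essentially the same route as the paper: the paper's proof simply cites Talagrand's strongly affine function on $(B_{E^*},w^*)$ of class $\C_2$ lying in no $\fra_\alpha$, and the strongly affine $f\in\C_2(X)\setminus\fra_2(X)$ on the simplices of \cite[Theorem 1.1]{spu-zel}, and feeds them into Theorem~\ref{T:appl} exactly as you do. The only difference is that you spell out the facts the paper leaves implicit --- that $\C(K)$ and $\fra(Y)$ for a simplex $Y$ are $L_1$-preduals and that $\ext B_{\C(K)^*}=\{\pm\delta_k\colon k\in K\}$ and $\ext B_{\fra(Y)^*}=\{\pm\ep_y\colon y\in\ext Y\}$ are weak$^*$ $F_\sigma$ when $\ext Y$ is $F_\sigma$ --- and all of these verifications are accurate.
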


\begin{proof}
By \cite{talagrand} there is a strongly affine function on $(B_{E^*},w^*)$ which is in the class $\C_2$ but not in $\fra_\alpha$ for any $\alpha<\omega_1$. If $X$ is any of the simplices from \cite[Theorem 1.1]{spu-zel}, then there is a strongly affine function $f\in\C_2(X)\setminus\fra_2(X)$. Hence both cases indeed follow from Theorem~\ref{T:appl}.
\end{proof}

\begin{remark}
The complementability condition in Theorem~\ref{T:main} is sufficient but not necessary. For example, if $K$ is any compact space and $F$ is a \fr space, then the space $\fra_\alpha(\M^1(K),F)$ coincide with the subspace of  $\C_\alpha(\M^1(K),F)$ consisting of strongly affine mappings (by \cite[Theorem 2.5]{l1pred}). Therefore, if both $X$ and $Y$ are of the form $\M^1(K)$ for some compact space $K$, the conclusion follows from Theorem B and \cite[Proposition 5.29]{lmns}.
Further, one can easily find $X$ and $Y$ of this form and choose $\varphi$ such that $\varphi^*(\fra(Y))$ is not complemented in $\fra(X)$. Indeed, choose compact spaces $K$ and $L$ and a continuous surjection $\psi\colon K\to L$ such that $\psi^*(\C(L))$ is not complemented in $\C(K)$. (One can take $K$ to be the Cantor set $\{0,1\}^\en$, $L$ to be the unit interval and $\psi\colon K\to L$ be the standard surjection. Then $\psi^*(C(L))$ is not complemented in $C(K)$, which follows e.g. from \cite[Lemma~2.7]{kal-kub}.) Further, let $\varphi:\M^1(K)\to\M^1(L)$ assign to each $\mu\in \M^1(K)$ its image under $\psi$.
Then $\varphi$ is an affine continuous surjection and $\varphi^*(\fra(\M^1(L)))$ is not complemented in $\fra(\M^1(K))$.
\end{remark}

\section{Proof of the main abstract result}\label{sec:pfmain}

The aim of this section is to prove Theorem~\ref{T:main}. To do that we need several lemmata. We point out that all vector spaces in this section
are supposed to be real. The results of this section can be also used for complex vector spaces if we forget the complex multiplication and look at them as at real spaces.

If $X$ is a compact convex set and $x\in X$, we will denote by $\ep_x$ the respective evaluation functional, i.e.,
$$\ep_x(u)=u(x),\quad u\in\fra(X).$$
Then clearly $\ep_x\in\fra(X)^*$ and $\norm{\ep_x}=1$ for each $x\in X$. Moreover, we have the following lemma.

\begin{lemma}\label{L:dual}
Let $X$ be a compact convex set. Then the following assertions hold.
\begin{itemize}
	\item[(i)] The mapping $\ep\colon x\mapsto \ep_x$ is an affine homeomorphism of $X$ into $(\fra(X)^*,w^*)$. Moreover,
	$\ep(X)=\{\eta\in \fra(X)^*\setsep \eta\ge0 \ \&\ \eta(1)=1\}$.
	\item[(ii)] For any $\eta\in \fra(X)^*$ there are $x_1,x_2\in X$ and $a_1,a_2\ge0$ such that $\eta=a_1\ep_{x_1}-a_2\ep_{x_2}$ and $\norm{\eta}=a_1+a_2$.
	\item[(iii)] The weak topology on $\fra(X)$ coincides with the topology of pointwise convergence on $X$.
\end{itemize}
\end{lemma}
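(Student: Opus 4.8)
The plan is to dispose of the purely topological content of (i) by hand, then to prove (ii) via a Hahn--Banach/Riesz/barycenter argument, and finally to read off both the representation statement in (i) and the whole of (iii) as cheap corollaries of (ii).

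First I would verify the opening sentence of (i). Affinity of $\ep$ is immediate: for $x,y\in X$, $t\in[0,1]$ and any $u\in\fra(X)$, affinity of $u$ gives $\ep_{tx+(1-t)y}(u)=u(tx+(1-t)y)=t\ep_x(u)+(1-t)\ep_y(u)$. Continuity of $\ep$ into $(\fra(X)^*,w^*)$ reduces to continuity of $x\mapsto\ep_x(u)=u(x)$ for each fixed $u$, which holds because $u$ is continuous. Injectivity follows since $\fra(X)$ separates the points of $X$: the restrictions to $X$ of the continuous linear functionals on the ambient locally convex space already separate points (Hahn--Banach) and lie in $\fra(X)$. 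As $X$ is compact and $(\fra(X)^*,w^*)$ Hausdorff, the continuous bijection $\ep\colon X\to\ep(X)$ is automatically a homeomorphism.

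For (ii) I would regard $\fra(X)$ as a subspace of the Banach space $\C(X)$ of real continuous functions on $X$ and extend $\eta$ by Hahn--Banach to $\wt\eta\in\C(X)^*$ with $\norm{\wt\eta}=\norm{\eta}$. By the Riesz representation theorem $\wt\eta$ is represented by a signed Radon measure $\nu$ on $X$ with total variation $\norm{\nu}=\norm{\eta}$. Taking the Jordan decomposition $\nu=\nu^+-\nu^-$ and putting $a_1=\nu^+(X)$, $a_2=\nu^-(X)$ gives $a_1+a_2=\norm{\nu}=\norm{\eta}$. When $a_i>0$ the normalization of $\nu^+$ (resp.\ $\nu^-$) is a Radon probability on $X$, whose barycenter I call $x_1$ (resp.\ $x_2$); when $a_i=0$ I pick any point of $X$. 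Then for every $u\in\fra(X)$,
\[
\eta(u)=\int_X u\di\nu=a_1u(x_1)-a_2u(x_2)=a_1\ep_{x_1}(u)-a_2\ep_{x_2}(u),
\]
by the defining property of the barycenter, which is exactly the desired representation with $\norm{\eta}=a_1+a_2$.

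The remaining assertions then fall out. For the ``moreover'' in (i): if $\eta\ge0$ and $\eta(1)=1$, then $\norm{\eta}=\eta(1)=1$ (positivity together with $-1\le u\le1$ for $\norm{u}\le1$ forces $\abs{\eta(u)}\le\eta(1)$), and writing $\eta=a_1\ep_{x_1}-a_2\ep_{x_2}$ as in (ii) yields $a_1+a_2=1$ and $a_1-a_2=\eta(1)=1$, hence $a_2=0$, $a_1=1$, so $\eta=\ep_{x_1}\in\ep(X)$; the inclusion $\ep(X)\subseteq\{\eta\ge0\setsep\eta(1)=1\}$ is trivial. For (iii): the topology of pointwise convergence on $X$ is generated by the evaluation functionals $\ep_x\in\fra(X)^*$, so it is coarser than the weak topology; conversely, by (ii) every $\eta\in\fra(X)^*$ is a linear combination of two evaluations, whence $u\mapsto\eta(u)$ is continuous for pointwise convergence, so the weak topology is coarser as well, and the two topologies coincide. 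I expect the only slightly delicate points to be the norm-preserving Hahn--Banach extension paired with the isometric Riesz correspondence (to secure $\norm{\nu}=\norm{\eta}$) and the appeal to the existence of barycenters; the rest is routine.
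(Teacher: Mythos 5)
Your proof is correct and takes essentially the same route as the paper: for (i) and (ii) the paper simply cites \cite[Proposition 4.31(a,b)]{lmns}, but the construction it then sketches for (ii) is precisely your Hahn--Banach extension / Riesz representation / Jordan decomposition / barycenter argument, and it likewise obtains (iii) immediately from (ii). The only difference is that you fill in the routine details the paper outsources to the citation, namely the direct verification that $\ep$ is an affine homeomorphism and the derivation of the description $\ep(X)=\{\eta\in\fra(X)^*\setsep \eta\ge0\ \&\ \eta(1)=1\}$ from (ii) via $\norm{\eta}=\eta(1)$.
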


\begin{proof} The assertions (i) and (ii) are proved in \cite[Proposition 4.31(a,b)]{lmns}. The assertion (iii) follows immediately from (ii).

Since we will use it later, we indicate how to find $x_1,x_2\in X$ and $a_1,a_2\ge0$ provided by (ii). Let $\eta\in\fra(X)^*$ be given.
By the Hahn-Banach theorem the functional $\eta$ can be extended to some $\tilde\eta\in\C(X)^*$ with the same norm. By the Riesz representation theorem the functional $\tilde\eta$ is represented by a signed Radon measure $\mu$ on $X$. We set $a_1=\mu^+(X)$ and $a_2=\mu^-(X)$. If  $a_1=0$, let $x_1\in X$ be arbitrary; if $a_1>0$, let $x_1$ be the barycenter of $
\frac{\mu^+}{a_1}$. The point $x_2$ is defined analogously.
\end{proof}

In the sequel we will use the following notation. If $X$ and $Y$ are two convex subsets of some vector spaces, by $\aff(X,Y)$ we denote the set of all affine mappings defined on $X$ with values in $Y$. Further, if $E$ and $F$ are vector spaces, $\lin(E,F)$ will denote the vector space of all linear operators from $E$ to $F$.

\begin{lemma}\label{L:linearizace} Let $X$ be a compact convex set and $F$ a vector space. For any affine mapping $f\colon X\to F$ there is a unique linear map $L_f\colon\fra(X)^*\to F$ such that $L_f(\ep_x)=f(x)$ for each $x\in X$.
Moreover, the operator $L\colon f\mapsto L_f$ is a linear bijection of the space $\aff(X,F)$ onto the space $\lin(\fra(X)^*,F)$.
\end{lemma}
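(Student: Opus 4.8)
The statement is a linearization result: every affine map $f\colon X\to F$ extends uniquely to a linear map $L_f$ on the dual $\fra(X)^*$, and the assignment $f\mapsto L_f$ is a linear bijection onto $\lin(\fra(X)^*,F)$. The whole proof hinges on Lemma~\ref{L:dual}, which tells us exactly what $\fra(X)^*$ looks like: by part (i) the image $\ep(X)$ is the set of positive normalized functionals, and by part (ii) every $\eta\in\fra(X)^*$ splits as $\eta=a_1\ep_{x_1}-a_2\ep_{x_2}$ with $a_i\ge0$. So the plan is to use (ii) to force the definition of $L_f(\eta)$, to use the affine structure of $f$ together with the barycenter construction to check this is well defined and linear, and finally to exhibit the inverse assignment to get bijectivity.

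**Construction and uniqueness of $L_f$.** Uniqueness is immediate: since $\ep(X)$ spans $\fra(X)^*$ (that is what part (ii) gives — every functional is a linear combination of evaluations), any linear $L$ with $L(\ep_x)=f(x)$ is determined on a spanning set, hence everywhere. For existence I would first observe that the closed convex hull / affine structure of $\ep(X)$ is respected by $f$: if $\eta=\sum_i a_i\ep_{x_i}$ is \emph{any} representation of $\eta$ as a finite linear combination of evaluation functionals, I want to set
\begin{equation*}
L_f(\eta)=\sum_i a_i f(x_i).
\end{equation*}
The main thing to verify is that this does not depend on the chosen representation, i.e. that $\sum_i a_i\ep_{x_i}=0$ forces $\sum_i a_i f(x_i)=0$. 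The clean way is to reduce to convex combinations: if $\sum_{i}b_i\ep_{y_i}=\sum_j c_j\ep_{z_j}$ with $b_i,c_j\ge0$ and $\sum b_i=\sum c_j=t>0$, then $\frac1t\sum b_i\ep_{y_i}$ and $\frac1t\sum c_j\ep_{z_j}$ are both normalized positive functionals, hence by Lemma~\ref{L:dual}(i) equal to $\ep_{y}$ and $\ep_{z}$ respectively for the barycenters $y,z$ of the corresponding convex combinations of points. Since $\ep$ is injective, $y=z$; and because $f$ is affine, $f(y)=\frac1t\sum b_i f(y_i)$ and $f(z)=\frac1t\sum c_j f(z_j)$, so the two values of $L_f$ agree. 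This is the step I expect to be the main obstacle, because one must carefully move between the barycentric description of positive functionals and the affinity of $f$, and handle the positive and negative parts separately (using the $a_1\ep_{x_1}-a_2\ep_{x_2}$ splitting of part (ii) to reduce a general linear combination to a difference of two convex combinations). Once well-definedness is in hand, linearity of $L_f$ is routine: additivity and homogeneity follow by combining representations of $\eta$ and $\eta'$ and rescaling, and $L_f(\ep_x)=f(x)$ holds by taking the one-term representation.

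**Linearity and bijectivity of $L\colon f\mapsto L_f$.** That $f\mapsto L_f$ is linear is clear from the formula, since $L_{f+g}$ and $L_{\lambda f}$ are determined by their values $f(x)+g(x)$ and $\lambda f(x)$ on the spanning set $\ep(X)$. For bijectivity I would construct the inverse explicitly: given any $T\in\lin(\fra(X)^*,F)$, define $f_T\colon X\to F$ by $f_T(x)=T(\ep_x)$. Since $\ep$ is affine (Lemma~\ref{L:dual}(i)) and $T$ is linear, $f_T$ is affine, so $f_T\in\aff(X,F)$; and by uniqueness $L_{f_T}=T$ while $L_f\mapsto f$ recovers $f$ because $L_f(\ep_x)=f(x)$. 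Thus $f\mapsto L_f$ and $T\mapsto f_T$ are mutually inverse, which gives the claimed linear bijection of $\aff(X,F)$ onto $\lin(\fra(X)^*,F)$ and completes the proof.
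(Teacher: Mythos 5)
Your proposal is correct and takes essentially the same approach as the paper: both define $L_f(\eta)$ through the representation of functionals as combinations of evaluation functionals from Lemma~\ref{L:dual}(ii), verify well-definedness using the affinity of $\ep$ and $f$, and obtain bijectivity from the inverse assignment $T\mapsto T\circ\ep$. The only difference is one of detail: you spell out the normalization-and-barycenter argument (reducing to convex combinations and using injectivity of $\ep$) that the paper compresses into the remark that well-definedness ``easily follows from the fact that both $\ep$ and $f$ are affine maps.''
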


\begin{proof} This essentially follows from \cite[Exercise 4.48]{lmns}, where a similar assertion is shown for scalar functions.
Let us indicate the proof. If $\eta\in \fra(X)^*$, let $x_1,x_2\in X$ and $a_1,a_2\ge0$ be provided by Lemma~\ref{L:dual}(ii).
We have to set
$$L_f(\eta)=a_1f(x_1)-a_2f(x_2).$$
Hence the uniqueness is clear. It remains to observe that this formula correctly defines a linear mapping. To see that the definition of $L_f$ is correct it is enough to check that
\begin{multline*}x_1,x_2,y_1,y_2\in X, a_1,a_2,b_1,b_2\ge 0, a_1\ep_{x_1}-a_2\ep_{x_2}=b_1\ep_{y_1}-b_2\ep_{y_2} \\ \Rightarrow a_1 f(x_1)-a_2f(x_2)=b_1f(y_1)-b_2f(y_2),\end{multline*}
which easily follows from the fact that both $\ep$ and $f$ are affine maps. Now, $L_f$ is clearly affine and $L_f(0)=0$, hence $L_f$ is linear.

It is clear that the operator $L$ is linear. Moreover, if $T:\fra(X)^*\to F$ is a linear mapping, then $T=L_{T\circ\ep}$, hence $L$ is a linear bijection.
\end{proof}

\begin{lemma}\label{L:linspoj}
Let $X$ be a compact convex set and $F$ a topological vector space. Let $L:f\mapsto L_f$ be the operator from Lemma~\ref{L:linearizace}. Then the following assertions hold.
\begin{itemize}
	\item[(i)] $L$ is a homeomorphism of $\aff(X,F)$ onto $\lin(\fra(X)^*,F)$, when both spaces are equipped with the pointwise convergence topology.
	\item[(ii)] $L_f$ is bounded if and only if $f$ is bounded.
	\item[(iii)] $L_f$ is weak$^*$-continuous on $B_{\fra(X)^*}$ if and only if $f$ is continuous.
	\item[(iv)] $L_f\in L_\alpha(\fra(X)^*,F)$ if and only if $f\in\fra_\alpha(X,F)$. Here $L_\alpha(\fra(X)^*,F)=(L(\fra(X)^*,F))_\alpha$, where $L(\fra(X)^*,F)$ is the subspace of $\lin(\fra(X)^*,F)$ consisting of maps which are weak$^*$-continuous on the unit ball.
\end{itemize}
\end{lemma}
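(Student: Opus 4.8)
The plan is to establish the four assertions in order, the two-point representation of Lemma~\ref{L:dual}(ii) being the workhorse throughout, and then to deduce (iv) from the lower parts by transfinite induction. For (i) I would use that both pointwise topologies are the initial topologies determined by the evaluation maps. If a net $f_i\to f$ pointwise on $X$ and $\eta\in\fra(X)^*$ is written as $\eta=a_1\ep_{x_1}-a_2\ep_{x_2}$ by Lemma~\ref{L:dual}(ii), then $L_{f_i}(\eta)=a_1f_i(x_1)-a_2f_i(x_2)\to a_1f(x_1)-a_2f(x_2)=L_f(\eta)$ by continuity of the vector operations of $F$, so $L$ is continuous; conversely, evaluating $L_f$ at $\ep_x$ returns $f(x)$, so pointwise convergence of $L_{f_i}$ forces that of $f_i$, and $L^{-1}$ is continuous. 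Since $L$ is already bijective by Lemma~\ref{L:linearizace}, this gives the homeomorphism.

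For (ii) I would first record the consequence of Lemma~\ref{L:dual}(ii) that $B_{\fra(X)^*}=\co(\ep(X)\cup(-\ep(X)))$, so every $\eta$ with $\norm{\eta}\le1$ has the form $a_1\ep_{x_1}-a_2\ep_{x_2}$ with $a_1,a_2\in[0,1]$. If $f$ is bounded, then $L_f(\eta)=a_1f(x_1)-a_2f(x_2)$ lies in $[0,1]f(X)-[0,1]f(X)$, a bounded subset of $F$, since scaling by $[0,1]$ and summing preserve boundedness in any topological vector space; as $\fra(X)^*$ is a normed space whose bounded sets are the norm-bounded ones, this shows $L_f$ is bounded. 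Conversely $\ep(X)\subset B_{\fra(X)^*}$, so $f(X)=L_f(\ep(X))$ is bounded whenever $L_f$ is.

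Part (iii) is the crux. The easy implication uses that $\ep$ is a $w^*$-homeomorphism onto $\ep(X)\subset B_{\fra(X)^*}$ by Lemma~\ref{L:dual}(i), whence $w^*$-continuity of $L_f$ on the ball forces continuity of $f=L_f\circ\ep$. For the converse I would parametrize the ball by the $w^*$-continuous surjection $\Psi\colon[0,1]\times X\times X\to(B_{\fra(X)^*},w^*)$ given by $\Psi(t,x_1,x_2)=t\ep_{x_1}-(1-t)\ep_{x_2}$, whose range is exactly $B_{\fra(X)^*}$ by the convex-hull description above. As a continuous surjection from a compact space onto a compact Hausdorff space, $\Psi$ is closed, hence a quotient map, so $L_f$ is $w^*$-continuous on $B_{\fra(X)^*}$ if and only if $L_f\circ\Psi\colon(t,x_1,x_2)\mapsto tf(x_1)-(1-t)f(x_2)$ is continuous; and the latter is immediate once $f$ is continuous. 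I expect this to be the main obstacle: one must know the ball is $\co(\ep(X)\cup(-\ep(X)))$ and transport continuity through the quotient map, rather than merely controlling $L_f$ on the state space $\ep(X)$.

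Finally, (iv) follows by transfinite induction on $\alpha$, the claim being $L(\fra_\alpha(X,F))=L_\alpha(\fra(X)^*,F)$. The base case $\alpha=0$ is exactly (iii) combined with the bijectivity in Lemma~\ref{L:linearizace}. For the inductive step, if $f=\lim_n f_n$ pointwise with $f_n\in\bigcup_{\beta<\alpha}\fra_\beta(X,F)$, then the induction hypothesis gives $L_{f_n}\in\bigcup_{\beta<\alpha}L_\beta(\fra(X)^*,F)$, and by (i) one has $L_{f_n}\to L_f$ pointwise, so $L_f\in L_\alpha$. Conversely, any $T=\lim_n T_n\in L_\alpha$ is linear, hence $T=L_f$ with $f=T\circ\ep$; each $T_n=L_{g_n}$ with $g_n\in\bigcup_{\beta<\alpha}\fra_\beta(X,F)$ by the induction hypothesis, and $g_n\to f$ pointwise by (i), so $f\in\fra_\alpha(X,F)$, completing the induction.
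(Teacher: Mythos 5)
Your proof is correct and takes essentially the same route as the paper's: the two-point representation from Lemma~\ref{L:dual}(ii) drives (i) and (ii), part (iii) factors $L_f$ through a continuous surjection from a compact parameter space onto $(B_{\fra(X)^*},w^*)$ followed by the closed-map/quotient argument, and (iv) is the same transfinite induction via (i) and (iii). The only (cosmetic) difference is in the parametrization for (iii): the paper uses $X\times X\times\{(a_1,a_2)\setsep a_1,a_2\ge 0,\ a_1+a_2\le 1\}$, for which surjectivity is immediate from Lemma~\ref{L:dual}(ii), whereas your $[0,1]\times X\times X$ with weights $(t,1-t)$ needs the additional (true) observation that $\ep(X)$ is convex, so the slack $1-a_1-a_2$ can be absorbed and $B_{\fra(X)^*}=\bigcup_{t\in[0,1]}\bigl(t\,\ep(X)-(1-t)\,\ep(X)\bigr)$ -- a point your appeal to the convex-hull description glosses over but which is standard.
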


\begin{proof} (i) Given $\eta\in \fra(X^*)$ fix $x_1,x_2\in X$ and $a_1,a_2\ge 0$ provided by Lemma~\ref{L:dual}(ii). Then for any  $f\in\aff(X,F)$ we have $L_f(\eta)=a_1f(x_1)-a_2f(x_2)$, thus the mapping $f\mapsto L_f(\eta)$ is continuous in the pointwise convergence topology. It follows that $L$ is continuous.

Conversely, given $x\in X$ we have $L^{-1}(T)(x)=T(\ep_x)$ for $T\in\lin(\fra(X)^*,F)$, hence $T\mapsto L^{-1}(T)(x)$ is continuous in the pointwise convergence topology. It follows that $L^{-1}$ is continuous.

(ii) If $L_f$ is bounded, i.e., if $L_f(B_{\fra(X)^*})$ is bounded in $F$, then $f=L_f\circ \ep$ is also bounded as $\ep(X)\subset B_{\fra(X)^*}$.

Conversely, let $f$ be bounded, i.e., let $f(X)$ be bounded. We will show that $L_f(B_{\fra(X)^*})$ is bounded as well. To this end let $U$ be any neighborhood of zero in $F$. Let $V$ be a balanced neighborhood of zero in $F$ such that $V+V\subset U$. Since $f(X)$ is bounded, there is $\lambda>0$ with $f(X)\subset \lambda V$. Let $\eta\in B_{\fra(X)^*}$ be arbitrary. Fix $x_1,x_2\in X$ and $a_1,a_2\ge 0$ provided by Lemma~\ref{L:dual}(ii). Since $a_1+a_2\le 1$, we have
$a_1 f(x_1)\in \lambda V$ and $-a_2 f(x_2)\in \lambda V$. Then $L_f(\eta)=a_1f(x_1)-a_2f(x_2)\in \lambda V+\lambda V\subset \lambda U$. Hence $L_f(B_{\fra(X)^*})\subset \lambda U$ and the proof is completed.

(iii) The `only if' part is trivial. Let us show the `if' part. Suppose that $f$ is continuous. Set
$$K=X\times X\times\{(a_1,a_2)\in\er^2\setsep a_1\ge0,a_2\ge 0, a_1+a_2\le 1\}.$$
Then $K$ is a compact space. Moreover, define mappings $\psi:K\to B_{\fra(X)^*}$ and  $g:K\to F$ by the formulas
$$\begin{aligned}
\psi(x_1,x_2,a_1,a_2)&=a_1\ep_{x_1}-a_2\ep_{x_2},\\
g(x_1,x_2,a_1,a_2)&=a_1f(x_1)-a_2f(x_2).
\end{aligned}$$
Then $\psi$ is a continuous mapping of $K$ onto $(B_{\fra(X)^*},w^*)$, $g$ is continuous and $g=L_f\circ\psi$.
It follows that $L_f$ is continuous on  $(B_{\fra(X)^*},w^*)$ which was to be proved.

(iv) This assertion follows by transfinite induction from (iii) since, due to (i), $f_n\to f$ pointwise on $K$ if and only if $L_{f_n}\to L_f$ pointwise.
\end{proof}

\begin{lemma}\label{L:bodova}
 Let $X$ be a compact convex set and let $L:f\mapsto L_f$ be the operator provided by Lemma~\ref{L:linearizace} in case $F=\er$.
 Then $L$ is a linear isometry of $\fra_b(X)$, the space of all bounded affine functions on $X$ equipped with the supremum norm, onto $\fra(X)^{**}$. Moreover, it is a homeomorphism from the pointwise convergence topology to the weak$^*$ topology and its restriction to $\fra(X)$ is the canonical embedding of $\fra(X)$ into its second dual.
\end{lemma}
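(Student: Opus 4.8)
The plan is to read off every assertion from the two preceding lemmas together with the representation of functionals in Lemma~\ref{L:dual}(ii). First I would settle that $L$ maps $\fra_b(X)$ bijectively onto $\fra(X)^{**}$. Recall that $\fra(X)^{**}$ consists exactly of the norm-continuous (equivalently, bounded) linear functionals on $\fra(X)^*$. By Lemma~\ref{L:linspoj}(ii), $L_f$ is bounded if and only if $f$ is bounded, so $L$ indeed sends $\fra_b(X)$ into $\fra(X)^{**}$. For surjectivity I would invoke the formula $L^{-1}(T)=T\circ\ep$ from Lemma~\ref{L:linearizace}: given $\Phi\in\fra(X)^{**}$ the affine function $f=\Phi\circ\ep$ satisfies $|f(x)|=|\Phi(\ep_x)|\le\norm{\Phi}$ (since $\norm{\ep_x}=1$), hence $f\in\fra_b(X)$ and $L_f=\Phi$.

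Next I would verify the isometry. Because $\ep(X)\subset B_{\fra(X)^*}$ and $L_f(\ep_x)=f(x)$, we immediately get $\norm{L_f}\ge\sup_{x\in X}|f(x)|=\norm{f}_\infty$. For the reverse inequality take any $\eta\in B_{\fra(X)^*}$ and write $\eta=a_1\ep_{x_1}-a_2\ep_{x_2}$ with $a_1+a_2=\norm{\eta}\le 1$ as in Lemma~\ref{L:dual}(ii); then $|L_f(\eta)|=|a_1f(x_1)-a_2f(x_2)|\le(a_1+a_2)\norm{f}_\infty\le\norm{f}_\infty$, so $\norm{L_f}\le\norm{f}_\infty$.

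For the claimed homeomorphism I would avoid checking mere sequential convergence (these spaces need not be metrizable) and instead compare the two weak topologies directly. The pointwise convergence topology on $\fra_b(X)$ is the weak topology induced by the evaluations $f\mapsto f(x)=L_f(\ep_x)$, $x\in X$, while the image under $L$ of the weak$^*$ topology of $\fra(X)^{**}$ is the weak topology induced by the functionals $f\mapsto L_f(\eta)$, $\eta\in\fra(X)^*$. Using $\eta=a_1\ep_{x_1}-a_2\ep_{x_2}$ once more, each such functional equals $a_1L_f(\ep_{x_1})-a_2L_f(\ep_{x_2})$, a finite linear combination of evaluations; conversely each evaluation is the functional attached to $\eta=\ep_x$. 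Hence the two families span the same subspace and generate the same weak topology, so $L$ is the desired homeomorphism. Finally, to identify $L$ restricted to $\fra(X)$ with the canonical embedding $f\mapsto(\eta\mapsto\eta(f))$, I would check equality on the functionals $\ep_x$, where $L_f(\ep_x)=f(x)=\ep_x(f)$, and extend to arbitrary $\eta$ by the same representation.

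I expect the only point requiring genuine care to be the homeomorphism statement, precisely because the two topologies are a priori defined by different families of functionals and the underlying spaces carry no metrizability; the representation in Lemma~\ref{L:dual}(ii) is exactly what lets one show that the two families have the same linear span and hence induce identical weak topologies. Everything else is a direct bookkeeping consequence of Lemmata~\ref{L:dual}, \ref{L:linearizace} and~\ref{L:linspoj}.
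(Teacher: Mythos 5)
Your proposal is correct and fills in, with the same ingredients the paper invokes, exactly the details the paper leaves implicit: its proof of Lemma~\ref{L:bodova} is a one-line appeal to Lemma~\ref{L:linspoj} (plus a citation to \cite[Proposition 4.32]{lmns}), and your bijectivity, isometry, and identification arguments are the routine bookkeeping behind that remark via Lemmata~\ref{L:dual}(ii), \ref{L:linearizace} and~\ref{L:linspoj}. The only stylistic difference is that your span-of-functionals argument for the homeomorphism re-proves what Lemma~\ref{L:linspoj}(i) already gives directly, since the weak$^*$ topology on $\fra(X)^{**}$ \emph{is} the topology of pointwise convergence on $\fra(X)^*$, so that part could be cited rather than redone.
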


\begin{proof} This is an easy consequence of Lemma~\ref{L:linspoj}.
(It also follows from \cite[Proposition 4.32]{lmns}.)
\end{proof}

In the sequel we will identify $\fra(X)^{**}$ and $\fra_b(X)$.

\begin{proof}[Proof of Theorem~\ref{T:main}.]
Fix a bounded linear projection $P_0:\fra(X)\to \varphi^*(\fra(Y))$ and set $P=(\varphi^*)^{-1}\circ P_0$. Then $P:\fra(X)\to\fra(Y)$ is a bounded linear operator such that $P\circ\varphi^*$ is the identity map on $\fra(Y)$. Recall that $\varphi^*$ is defined by
$\varphi^*(f)=f\circ \varphi$, $f\in\fra(Y)$.

The dual operator $\varphi^{**}\colon\fra(X)^*\to\fra(Y)^*$ satisfies
\begin{equation}
\label{eq:phi**}
\varphi^{**}(\ep_x)=\ep_{\varphi(x)},\quad x\in X.
\end{equation}
Indeed, for any $x\in X$ and $g\in\fra(Y)$ we have
$$\varphi^{**}(\ep_x)(g)=\ep_x(\varphi^*(g))=\ep_x(g\circ \varphi)=g(\varphi(x))=\ep_{\varphi(x)}(g).$$

Let $F$ be a topological vector space and let $f:X\to F$ be affine.
We define an affine map $Qf:Y\to F$ by the formula
\begin{equation}\label{eq:Q}
Qf(y)=L_f(P^*\ep_y), \quad y\in Y.
\end{equation}
It is clear that $Qf$ is affine since it is a composition of three affine maps. Moreover, $Qf\in\fra_\alpha(Y,F)$ whenever
$f\in\fra_\alpha(X,F)$. Indeed, the mapping $\ep$ is continuous from $Y$ to $(\fra(Y)^*,w^*)$, $P^*$ is weak$^*$-to-weak$^*$ continuous and $P^*(\ep(Y))\subset \norm{P} B_{\fra(X)^*}$. Further, if $f\in\fra_\alpha(X,F)$, then by Lemma~\ref{L:linspoj}(iv), $L_f\in L_\alpha(\fra(X)^*,F)$, hence, in particular, $L_f\in\fra_\alpha((\norm{P}B_{\fra(X)^*},w^*),F)$. Thus we get in this case $Qf\in\fra_\alpha(Y,F)$.

The proof will be completed if we show that $Q(g\circ\varphi)=g$ for each affine $g:Y\to F$. So, fix an affine map $g:Y\to F$ and $y\in Y$. Then $P^*(\ep_y)\in\fra(X)^*$, hence, by Lemma~\ref{L:dual}, we can find $x_1,x_2\in X$ and $a_1,a_2\ge 0$ such that $P^*(\ep_y)=a_1\ep_{x_1}-a_2\ep_{x_2}$ and $a_1+a_2\le\norm{P}$. We have
$$\begin{aligned}
Q(g\circ\varphi)(y)&=L_{g\circ\varphi}(P^*\ep_y)=a_1(g\circ\varphi)(x_1)-a_2(g\circ\varphi)(x_2)\\
&=a_1g(\varphi(x_1))-a_2g(\varphi(x_2))=a_1 L_g(\ep_{\varphi(x_1)})-a_2L_g(\ep_{\varphi(x_2)})\\
&=L_g(a_1\ep_{\varphi(x_1)}-a_2\ep_{\varphi(x_2)})=L_g(a_1 \varphi^{**}(\ep_{x_1})-a_2\varphi^{**}(\ep_{x_2}))\\
&=L_g(\varphi^{**}(a_1\ep_{x_1}-a_2\ep_{x_2}))=L_g(\varphi^{**}(P^*(\ep_y)))\\
&=L_g( (P\circ\varphi^*)^*(\ep_y))=L_g(\ep_y)=g(y).
\end{aligned}$$
Indeed, the first four equalities follow from the definitions. The fifth one follows from the linearity of $L_g$.
The sixth equality is a consequence of \eqref{eq:phi**}, the seventh one follows from the linearity of $\varphi^{**}$.
The next one follows from the choice of $a_1,a_2,x_1,x_2$. In the following two equalities we use that $\varphi^{**}\circ P^*=(P\circ\varphi^*)^*=(\id_{\fra(Y)})^*=\id_{\fra(Y)^*}$. Finally, the last one follows from the definition of $L_g$.
\end{proof}

\section{Applications to affine classes of strongly affine Baire mappings}

The aim of this section is to prove Theorem~\ref{T:appl}. We will need several lemmata.

The first one shows an easy correspondence between complementability of Banach spaces and complementability of spaces of affine continuous functions.

\begin{lemma}\label{L:complem}
Let $E_2$ be a complemented subspace of a Banach space $E_1$. Let $\pi:E_1^*\to E_2^*$ be the restriction mapping.
Then $\pi^*(\fra(B_{E_2^*},w^*))$ is complemented in $\fra(B_{E_1^*},w^*)$.
\end{lemma}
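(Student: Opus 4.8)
The plan is to recognize $\pi$, restricted to the dual balls, as a continuous affine surjection in the sense of the main setup, to describe the spaces $\fra(B_{E_i^*},w^*)$ explicitly, and then to transport a projection of $E_1$ onto $E_2$ into a projection on the function spaces. (I work with real scalars; the complex case is identical after realification, as in the paper's convention.)

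First I would set up the geometry. Writing $j\colon E_2\to E_1$ for the inclusion, the restriction map is $\pi=j^*$, so $\pi$ is linear, weak*-to-weak* continuous, of norm at most $1$, and hence $\varphi:=\pi|_{B_{E_1^*}}$ maps $(B_{E_1^*},w^*)$ into $(B_{E_2^*},w^*)$. By the Hahn--Banach theorem $\varphi$ is onto, so it is a continuous affine surjection between compact convex sets, and for $g\in\fra(B_{E_2^*},w^*)$ the function $\pi^*(g)=g\circ\pi$ restricted to $B_{E_1^*}$ is exactly $\varphi^*(g)=g\circ\varphi$; thus $\pi^*$ coincides with the isometric embedding $\varphi^*$ of the main setup. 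Since $E_2$ is complemented in $E_1$, I fix a bounded linear projection $R\colon E_1\to E_1$ with $R^2=R$ and $R(E_1)=E_2$.

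Next comes the description of the function spaces, which is the heart of the matter. For any Banach space $E$ I claim every $a\in\fra(B_{E^*},w^*)$ has the form $a(\eta)=\eta(e)+c$ with $e\in E$, $c\in\er$; write $\widehat e$ for the evaluation $\eta\mapsto\eta(e)$. To see this I split $a$ about the origin into its even part $\eta\mapsto\tfrac12(a(\eta)+a(-\eta))$ and odd part $\eta\mapsto\tfrac12(a(\eta)-a(-\eta))$: the even part is affine and even, hence constant equal to $a(0)$ (restrict to rays through $0$); the odd part is affine, vanishes at $0$ and is odd, hence additive and $\er$-homogeneous on $B_{E^*}$ and extends to an $\er$-linear functional on $E^*$ that is weak*-continuous on $B_{E^*}$, so weak*-continuous by Krein--\v{S}mulyan, thus equal to $\widehat e$ for some $e\in E$. (Alternatively this is available in \cite{lmns}.) Consequently $(e,c)\mapsto\widehat e+c$ is a linear bijection of $E\oplus\er$ onto $\fra(B_{E^*},w^*)$, and the estimates $\max\{\norm e,|c|\}\le\norm{\widehat e+c}\le\norm e+|c|$ — the lower bound coming from pairing with $\eta$ and $-\eta$ — show it is an isomorphism. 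Under these identifications $\varphi^*$ corresponds to $j\oplus\id_\er$, so $\pi^*(\fra(B_{E_2^*},w^*))=\{\widehat e+c\setsep e\in E_2,\ c\in\er\}$. Finally I define $P_0\colon\fra(B_{E_1^*},w^*)\to\fra(B_{E_1^*},w^*)$ by $P_0(\widehat e+c)=\widehat{Re}+c$. It is linear, idempotent (since $R^2=R$), and has range exactly $\pi^*(\fra(B_{E_2^*},w^*))$ (since $R(E_1)=E_2$ and $R$ fixes $E_2$); it is bounded because $\norm{P_0(\widehat e+c)}\le\norm{Re}+|c|\le\norm R\,\norm e+|c|\le(\norm R+1)\,\norm{\widehat e+c}$. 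Hence $P_0$ is a bounded linear projection onto $\pi^*(\fra(B_{E_2^*},w^*))$, which is therefore complemented.

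I expect the main obstacle to be the explicit description of $\fra(B_{E^*},w^*)$ as $\{\widehat e+c\}$, specifically proving that the odd part is the restriction of a \emph{weak*-continuous} linear functional; once this is secured, the only other nonroutine point is the boundedness of $P_0$, which rests on the estimate $\max\{\norm e,|c|\}\le\norm{\widehat e+c}$ since the supremum norm on $\fra(B_{E^*},w^*)$ is not the obvious product norm.
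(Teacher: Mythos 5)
Your proof is correct and is essentially the paper's own argument: both rest on the representation $f(x^*)=f(0)+x^*(x_f)$ of elements of $\fra(B_{E^*},w^*)$ (which the paper asserts, citing \cite[Lemma 3.11(a)]{l1pred} in the complex case, while you derive it directly via the even/odd splitting and Krein--\v{S}mulyan) and then transport the projection of $E_1$ onto $E_2$ to the function space, your $P_0$ being exactly the paper's composition $\pi^*\circ Q$. The only divergence is the complex case, which the paper handles by the explicit decomposition $f(x^*)=f(0)+x^*(x_f)+\overline{x^*(y_f)}$ and $Qf=\pi^*(\Re Q_0f)$, whereas your realification via the isometric weak$^*$-homeomorphism $\eta\mapsto\Re\eta$ of $B_{E^*}$ onto the real dual ball, which intertwines the two restriction maps, works equally well.
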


\begin{proof} Let us first give the proof for real spaces. Let $P\colon E_1\to E_2$ be a bounded linear projection.
For any $f\in\fra(B_{E_1^*},w^*)$ there is a unique $x_f\in E_1$ such that $f(x^*)=f(0)+x^*(x_f)$ for any $x^*\in B_{E_1^*}$.
Hence we can define a bounded linear operator $Q:\fra(B_{E_1^*},w^*)\to\fra(B_{E_2^*},w^*)$ by
\[
Qf(x^*)=f(0)+x^*(Px_f),\quad x^*\in B_{E_2^*}, f\in \fra(B_{E_1^*}, w^*).
\]
Then $\pi^*\circ Q$ is the required projection.

Now suppose that $E_1$ and $E_2$ are complex spaces. Again, let $P\colon E_1\to E_2$ be a bounded linear projection.
If $f\in\fra((B_{E_1^*},w^*),\ce)$, then there are uniquely determined $x_f,y_f\in E_1$ such that
$$f(x^*)=f(0) + x^*(x_f)+\ov{x^*(y_f)},\quad x^*\in B_{E_1^*},$$
and, moreover, the assignment $f\mapsto x_f$ is a bounded linear map and $f\mapsto y_f$ is a bounded conjugate-linear map (this follows, for example,
from \cite[Lemma 3.11(a)]{l1pred}).
 Hence, we can define a bounded linear operator
 \[
 Q_0\colon\fra((B_{E_1^*},w^*),\ce)\to\fra((B_{E_2^*},w^*),\ce)
 \]
 by
\[
Q_0f(x^*)=f(0)+x^*(Px_f)+\ov{x^*(Py_f)},\quad x^*\in B_{E_2^*}, f\in \fra((B_{E_1^*},w^*),\ce).
\]
Then a required bounded linear projection $Q:\fra(B_{E_1^*},w^*)\to \pi^*(\fra(B_{E_2^*},w^*))$ can be defined by $Qf=\pi^*(\Re Q_0f)$.
\end{proof}

The second lemma is a trivial observation on isomorphic spaces.

\begin{lemma}\label{L:isom} Let $T\colon E_1\to E_2$ be a surjective isomorphism of Banach spaces and $F$ be a vector space. Then there is a unique linear map $\tilde T\colon \aff(B_{E_1^*},F)\to \aff(B_{E_2^*},F)$ such that for any real-linear map $L:E_1^*\to F$ we have $\tilde T(L\r_{B_{E_1^*}})=(L\circ T^*)\r_{B_{E_2^*}}$.

Moreover, $\tilde T$ is a linear bijection and, if $F$ is a topological vector space and $f\in\aff(B_{E_1^*},F)$, the following assertions hold.
\begin{itemize}
	\item $\tilde T$ is a homeomorphism when both spaces are equipped with the pointwise convergence topology.
	
	\item $\tilde T f$ is continuous if and only if $f$ is continuous.
	\item $\tilde T f\in\C_\alpha((B_{E_2^*},w^*),F)$ if and only if $f\in\C_\alpha((B_{E_1^*},w^*),F)$.
	\item $\tilde T f\in\fra_\alpha((B_{E_2^*},w^*),F)$ if and only if $f\in\fra_\alpha((B_{E_1^*},w^*),F)$.
\end{itemize}

Further, if $F$ is \fr space,
\begin{itemize}
\item $\tilde T f$ is strongly affine if and only if $f$ is strongly affine.
\end{itemize}
\end{lemma}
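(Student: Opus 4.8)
The plan is to build $\tilde T$ from a single structural observation: every affine map on the unit ball extends uniquely to an affine (constant plus real-linear) map on the whole dual. Concretely, I first show that for each $f\in\aff(B_{E_1^*},F)$ there is a unique pair $(c_f,L_f)$ with $c_f\in F$ and $L_f\colon E_1^*\to F$ real-linear such that $f(x^*)=c_f+L_f(x^*)$ for all $x^*\in B_{E_1^*}$. For existence put $c_f=f(0)$ and $g=f-c_f$; affinity forces $g$ to be odd and positively homogeneous on $B_{E_1^*}$, so $L_f(z^*):=\frac1s g(sz^*)$ (any $0<s\le\norm{z^*}^{-1}$) is well defined and $\er$-homogeneous, the only nontrivial point being additivity: for small $s$ the points $sz^*,sw^*$ and $\frac12 s(z^*+w^*)$ all lie in the ball, and affinity together with homogeneity yields $g(s(z^*+w^*))=g(sz^*)+g(sw^*)$. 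Uniqueness is immediate by evaluating at $0$ and using homogeneity. This identifies $\aff(B_{E_1^*},F)$ linearly with $F\times\lin(E_1^*,F)$ and, crucially, represents the extension $\hat f:=c_f+L_f$ at each $z^*\in E_1^*$ as a fixed affine combination of values of $f$ on the ball, $\hat f(z^*)=n\,f(n^{-1}z^*)-(n-1)f(0)$ for $n\ge\norm{z^*}$.

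With this in hand I set $\tilde T f:=(\hat f\circ T^*)\r_{B_{E_2^*}}$, where $T^*\colon E_2^*\to E_1^*$ is the bijective, weak$^*$-continuous adjoint. Note $T^*(B_{E_2^*})$ need not lie in $B_{E_1^*}$, which is exactly why the extension $\hat f$ is needed rather than $f$ itself. Since $\hat f\circ T^*$ is affine on $E_2^*$, we get $\tilde Tf\in\aff(B_{E_2^*},F)$, and for $f=L\r_{B_{E_1^*}}$ (so $c_f=0$, $L_f=L$) the construction gives $\tilde Tf=(L\circ T^*)\r_{B_{E_2^*}}$, the defining relation; linearity follows from linearity of $f\mapsto c_f$ and $f\mapsto L_f$. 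For the converse halves of all the equivalences I would record the functoriality $\widetilde{T^{-1}}=\tilde T^{-1}$: the unique affine extension of $\tilde Tf$ is $\hat f\circ T^*$, so precomposing with $(T^*)^{-1}=(T^{-1})^*$ and restricting returns $f$. Uniqueness in the statement then amounts to the fact that the prescribed relation pins $\tilde T$ down on the maps vanishing at the origin, the constant direction being fixed by the normalization $c\mapsto c$ that the construction enforces.

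The topological bullets follow formally. The affine-combination formula for $\hat f(z^*)$ shows that $f\mapsto\tilde Tf(y^*)=\hat f(T^*y^*)$ is continuous for the pointwise-convergence topologies, and symmetrically for $\tilde T^{-1}=\widetilde{T^{-1}}$, giving the first bullet. If $f$ is weak$^*$-continuous, the same formula makes $\hat f$ weak$^*$-continuous on every bounded set; since $T^*$ is weak$^*$-continuous and carries the compact set $B_{E_2^*}$ into the bounded set $\norm{T}B_{E_1^*}$, the composite $\tilde Tf$ is weak$^*$-continuous, and the converse is symmetric, yielding the second bullet. The two Baire-class bullets then follow by transfinite induction: the base cases are the continuity- and affinity-preservation just established, while the inductive step is automatic because $\tilde T$ is a homeomorphism for pointwise convergence (hence preserves pointwise sequential limits), with $\widetilde{T^{-1}}$ supplying the opposite implication.

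The genuinely substantial point is the last bullet, on strong affinity, which I expect to be the main obstacle: strong affinity is a statement about barycenters of Radon probability measures, yet $T^*$ neither preserves the ball nor is isometric. Using that $g$ is strongly affine iff $\tau\circ g$ is for every $\tau\in F^*$, together with $\tau\circ\tilde Tf=\tilde T(\tau\circ f)$, I reduce to scalar $F=\er$ and prove two facts. First, strong affinity is preserved under the extension to a larger ball: for a probability $\mu$ on $rB_{E_1^*}$ with barycenter $p$, pushing forward by the scaling $z^*\mapsto r^{-1}z^*$ reduces $\int\hat f\,d\mu=\hat f(p)$ to the hypothesis $\int f\,d\mu'=f(r^{-1}p)$ on $B_{E_1^*}$, tracking the constant and linear parts separately. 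Second, for a probability $\nu$ on $B_{E_2^*}$ with barycenter $b$, the image measure $\mu$ of $\nu$ under $T^*$ is a probability on $\norm{T}B_{E_1^*}$ with barycenter $T^*b$ (as $T^*$ is affine and weak$^*$-continuous), whence $\int\tilde Tf\,d\nu=\int\hat f\,d\mu=\hat f(T^*b)=\tilde Tf(b)$, with $\nu$-integrability transferred by the change-of-variables formula. The reverse implication again comes from $\widetilde{T^{-1}}$.
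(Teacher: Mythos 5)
Your construction coincides with the paper's: the paper's entire proof consists of the decomposition $f=f(0)+U_f\r_{B_{E_1^*}}$ with $U_f$ real-linear, followed by setting $\tilde T f=f(0)+(U_f\circ T^*)\r_{B_{E_2^*}}$, which is exactly your $\hat f\circ T^*$ restricted to the ball. The paper leaves all verifications (including the existence of $U_f$, which you establish correctly via oddness and positive homogeneity of $f-f(0)$) to the reader, so most of your write-up is a faithful, more detailed version of the intended argument; in particular your treatment of the strong affinity bullet (scalar reduction via $\tau\circ\tilde Tf=\tilde T(\tau\circ f)$, strong affinity of the extension on a larger ball, and the image measure under $T^*$ with barycenter $T^*b$) is sound, and you rightly flag that the defining relation, involving only restrictions of linear maps, determines $\tilde T$ on that subspace but not on the constants, so literal uniqueness needs the normalization $\tilde Tc=c$ that both your construction and the paper's enforce.

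One step is wrong as stated: for the $\C_\alpha$ bullet the inductive step is not ``automatic because $\tilde T$ is a homeomorphism for pointwise convergence.'' Membership of an affine $f$ in $\C_\alpha((B_{E_1^*},w^*),F)$ is witnessed by approximating sequences of functions that need not be affine, and $\tilde T$ is simply not defined on such functions, so it cannot be used to transport those limits. (For the $\fra_\alpha$ bullet your argument is fine, since there the approximants are affine continuous.) The repair is already contained in your own formula: fix an integer $n\ge\max(\norm{T},1)$; then
\begin{equation*}
\tilde T f(y^*)=\hat f(T^*y^*)=n\,f(n^{-1}T^*y^*)-(n-1)f(0),\quad y^*\in B_{E_2^*},
\end{equation*}
that is, $\tilde Tf=n\,(f\circ S)-(n-1)f(0)$, where $S(y^*)=n^{-1}T^*y^*$ is a weak$^*$-continuous map of $B_{E_2^*}$ into $B_{E_1^*}$ (as $\norm{T^*y^*}\le\norm{T}\le n$). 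Right composition with the fixed continuous map $S$, together with the affine rescaling $g\mapsto n\,g-(n-1)g(0)$, sends $\C_\alpha((B_{E_1^*},w^*),F)$ into $\C_\alpha((B_{E_2^*},w^*),F)$ by a routine transfinite induction over arbitrary, not necessarily affine, approximants; the converse implication follows by applying the same formula to $T^{-1}$.
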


\begin{proof} Let $f\in\aff(B_{E_1^*},F)$. Then there is a real-linear mapping $U_f\colon E_1^*\to F$  such that $f=f(0)+U_f\r_{B_{E_1^*}}$. It is enough to set $\tilde T f=f(0)+(U_f\circ T^*)\r_{B_{E_2^*}}$.
\end{proof}

We will need also the following lemma which summarizes complex versions of some lemmata from the previous section.

\begin{lemma}\label{L:dualcomplex} Let $X$ be a compact convex set.
\begin{itemize}
	\item[(i)] For any $\eta\in\fra(X,\ce)^*$ there exist $x_1,x_2,x_3,x_4\in X$ and $a_1,a_2,a_3,a_4\ge 0$ such that $a_1+a_2+a_3+a_4\le2\norm{\eta}$ and $\eta=a_1\ep_{x_1}-a_2\ep_{x_2} + i (a_3\ep_{x_3}-a_4\ep_{x_4})$.
	\item[(ii)] If $F$ is a complex vector space and $f\in\aff(X,F)$, then there is a unique $L_f\in \lin(\fra(X,\ce)^*,F)$ such that $L_f(\ep_x)=f(x)$ for each $x\in X$. Moreover, the operator $L\colon f\mapsto L_f$ is a linear bijection.
	\item[(iii)] If $F$ is a complex topological vector space, then the operator $L$ has the properties from Lemma~\ref{L:linspoj}.
\end{itemize}
\end{lemma}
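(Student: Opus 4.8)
The plan is to mimic, step by step, the proofs of Lemma~\ref{L:dual}(ii), Lemma~\ref{L:linearizace} and Lemma~\ref{L:linspoj}, carrying a splitting into real and imaginary parts through each stage; throughout I use that $\fra(X,\ce)=\fra(X)+i\fra(X)$, so a complex affine continuous function is determined by its real and imaginary parts, each in $\fra(X)$. For (i), I would extend $\eta\in\fra(X,\ce)^*$ by the complex Hahn--Banach theorem to some $\tilde\eta\in\C(X,\ce)^*$ with $\norm{\tilde\eta}=\norm{\eta}$ and represent it by a complex Radon measure $\mu$, so that $\eta(u)=\int u\di\mu$ for $u\in\fra(X,\ce)$. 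Writing $\mu=\mu_1+i\mu_2$ with $\mu_1=\Re\mu$, $\mu_2=\Im\mu$ real signed measures, I would Jordan-decompose $\mu_j=\mu_j^+-\mu_j^-$ and set $a_1=\mu_1^+(X)$, $a_2=\mu_1^-(X)$, $a_3=\mu_2^+(X)$, $a_4=\mu_2^-(X)$, with $x_1,\dots,x_4$ the barycenters of the corresponding normalized positive measures (and arbitrary where the mass vanishes), exactly as in Lemma~\ref{L:dual}. Testing against an arbitrary $u\in\fra(X,\ce)$ and using that barycenters reproduce affine functions (applied to complex, not merely real, affine functions) gives $\eta=a_1\ep_{x_1}-a_2\ep_{x_2}+i(a_3\ep_{x_3}-a_4\ep_{x_4})$. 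The estimate follows from $a_1+a_2=\abs{\mu_1}(X)\le\abs{\mu}(X)=\norm{\eta}$ and likewise $a_3+a_4\le\norm{\eta}$, since $\abs{\Re\mu}\le\abs{\mu}$ and $\abs{\Im\mu}\le\abs{\mu}$; summing yields $a_1+a_2+a_3+a_4\le2\norm{\eta}$.

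For (ii), I would define $L_f(\eta)=a_1f(x_1)-a_2f(x_2)+i(a_3f(x_3)-a_4f(x_4))$ using a representation from (i). Well-definedness reduces to the real case: applying two representations of the same $\eta$ to real functions $u\in\fra(X)$ and comparing real and imaginary parts shows that the two competing ``real'' functionals $a_1\ep_{x_1}-a_2\ep_{x_2}$ agree in $\fra(X)^*$, and similarly for the imaginary pair; the implication established in Lemma~\ref{L:linearizace} (which uses only affineness of $\ep$ and of $f$, hence is valid for $F$ viewed as a real space) then forces the corresponding combinations of values of $f$ to coincide. Additivity and $\er$-homogeneity of $L_f$ follow as before; the one genuinely new point is $\ce$-homogeneity, for which it suffices to check $L_f(i\eta)=iL_f(\eta)$. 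Multiplying the representation by $i$ rewrites it as $(a_4\ep_{x_4}-a_3\ep_{x_3})+i(a_1\ep_{x_1}-a_2\ep_{x_2})$, and feeding this valid representation of $i\eta$ into the formula gives $a_4f(x_4)-a_3f(x_3)+i(a_1f(x_1)-a_2f(x_2))=iL_f(\eta)$. Injectivity and surjectivity of $L$ (via $f=T\circ\ep$) are verbatim as in Lemma~\ref{L:linearizace}.

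For (iii), I would repeat the arguments of Lemma~\ref{L:linspoj} with (i) replacing Lemma~\ref{L:dual}(ii): the pointwise homeomorphism property and the boundedness equivalence are immediate from the representation formula, and for weak$^*$-continuity on the ball I would use the compact parameter space $K=X^4\times\{(a_1,a_2,a_3,a_4)\in\er^4\setsep a_j\ge0,\ \sum_j a_j\le2\}$ together with the continuous maps $\psi(x_1,\dots,a_4)=a_1\ep_{x_1}-a_2\ep_{x_2}+i(a_3\ep_{x_3}-a_4\ep_{x_4})$ and $g(x_1,\dots,a_4)=a_1f(x_1)-a_2f(x_2)+i(a_3f(x_3)-a_4f(x_4))$. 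By (i) the map $\psi$ is onto $(B_{\fra(X,\ce)^*},w^*)$ and $g=L_f\circ\psi$, so $L_f$ is continuous on the ball whenever $f$ is continuous; the Baire-class equivalence then follows by the same transfinite induction as in Lemma~\ref{L:linspoj}(iv).

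I expect the main obstacle to be purely bookkeeping in (i): getting the factor $2$ in the norm estimate correctly from the two total-variation bounds, and making sure the barycenter identity is genuinely applied to complex affine functions rather than only real ones. The verification of $\ce$-linearity in (ii) is the only conceptually new ingredient, and it reduces to the short computation above once the representation from (i) is in hand.
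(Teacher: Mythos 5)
Your proposal is correct and follows essentially the same route as the paper: the Hahn--Banach extension and Riesz representation with Jordan decompositions of $\Re\mu$ and $\Im\mu$ for (i), the same defining formula for $L_f$ with well-definedness reduced to affineness of $\ep$ and $f$ plus the check $L_f(i\eta)=iL_f(\eta)$ for (ii), and the compact parameter space $K=X^4\times\{a\in\er^4\setsep a_j\ge0,\ \sum_j a_j\le2\}$ for (iii). Your write-up even supplies details the paper compresses (the total-variation estimate $\abs{\Re\mu}(X)+\abs{\Im\mu}(X)\le2\abs{\mu}(X)$ and the explicit surjection $\psi$, where the paper merely says the proof of (iii) is ``analogous to the real case''), and all of these details check out.
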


\begin{proof} (i) This assertion can be derived from Lemma~\ref{L:dual}(ii). However, we present a direct proof, since we will later use the construction. Let $\eta\in\fra(X,\ce)^*$. By the Hahn-Banach theorem we can choose $\tilde\eta\in\C(X,\ce)^*$ extending $\eta$ and having the same norm. Fix a complex Radon measure $\mu$ on $X$ which represents $\tilde\eta$ (by the Riesz representation theorem). We set
$$a_1=(\Re\mu)^+(X),\ a_2=(\Re\mu)^-(X),\ a_3=(\Im\mu)^+(X),\ a_4=(\Im\mu)^-(X).$$
If $a_1=0$, let $x_1\in X$ be arbitrary. If $a_1>0$, let $x_1$ be the barycenter of $(\Re\mu^+)/a_1$. Similarly we define $x_2$, $x_3$ and $x_4$.

(ii) Let $f\in\aff(X,F)$. Given $\eta\in\fra(X,\ce)^*$, fix a representation of $\eta$ by (i). We have to set
\[
L_f(\eta)=a_1f(x_1)-a_2f(x_2)+i(a_3f(x_3)-a_4f(x_4)).
\]
Therefore, the uniqueness is clear. To see that $L_f$ is well defined, it is enough to check that
\[
\begin{array}{c}
x_1,x_2,x_3,x_4,y_1,y_2,y_3,y_4\in X, a_1,a_2,a_3,a_4,b_1,b_2,b_3,b_4\ge 0,\\
 a_1\ep_{x_1}-a_2\ep_{x_2}+i(a_3\ep_{x_3}-a_4\ep_{x_4})=b_1\ep_{y_1}-b_2\ep_{y_2}+i(b_3\ep_{y_3}-b_4\ep_{y_4}) \\
\Downarrow \\
a_1 f(x_1)-a_2f(x_2)+i(a_3 f(x_3)-a_4f(x_4))\\
\hskip3cm=b_1f(y_1)-b_2f(y_2)+i(b_3f(y_3)-b_4f(y_4)),
\end{array}
\]
which easily follows from the fact that $\ep$ and $f$ are affine. It is clear that $L_f$ is affine. Since $L_f(0)=0$ and $L_f(i\eta)=i L_f(\eta)$ for $\eta\in\fra(X,\ce)^*$, we get that $L_f$ is linear. Further, the operator $L$ is obviously linear.
Since for any $T\in\lin(\fra(X,\ce)^*,F)$ we have $T=L_{T\circ\ep}$, $L$ is a linear bijection.

(iii) The proof is analogous to the real case.
\end{proof}

\begin{lemma}\label{L:sa} Let $X$ be a compact convex set, $F$ a \fr space over $\ef$ and $f\colon X\to F$ a  strongly  affine mapping which belongs to $\C_\alpha(X,F)$ for some $\alpha<\omega_1$. Let $L_f\colon \fra(X,\ef)^*\to F$ be the linear map provided by Lemma~\ref{L:dual} or Lemma~\ref{L:dualcomplex}. Then $L_f\r_{(B_{\fra(X,\ef)^*},w^*)}$ is a strongly affine mapping
which belongs to $\C_\alpha((B_{\fra(X,\ef)^*},w^*),F)$.
\end{lemma}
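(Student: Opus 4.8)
The statement contains two assertions, and the plan is to derive both from a single device: the compact space $K$ and the continuous surjection $\psi\colon K\to B:=(B_{\fra(X,\ef)^*},w^*)$ constructed in the proof of Lemma~\ref{L:linspoj}(iii) (and its complex analogue Lemma~\ref{L:dualcomplex}(iii)). Recall that $K=X\times X\times\{(a_1,a_2)\colon a_1,a_2\ge0,\ a_1+a_2\le1\}$ and $\psi(x_1,x_2,a_1,a_2)=a_1\ep_{x_1}-a_2\ep_{x_2}$, so that $L_f\circ\psi$ is the continuous-coefficient combination $(x_1,x_2,a_1,a_2)\mapsto a_1f(x_1)-a_2f(x_2)$.

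For the Baire-class assertion I would first check that $L_f\circ\psi\in\C_\alpha(K,F)$. Since $f\in\C_\alpha(X,F)$ and the coordinate projections $\pi_1,\pi_2\colon K\to X$ are continuous, $f\circ\pi_1$ and $f\circ\pi_2$ belong to $\C_\alpha(K,F)$; multiplying by the continuous scalar functions $a_1,a_2$ and subtracting stays in $\C_\alpha(K,F)$ (a routine transfinite induction, using that the product of a continuous scalar function with a $\C_\beta$ function is $\C_\beta$). As $\psi$ is a continuous surjection of compact spaces and $(L_f|_B)\circ\psi=L_f\circ\psi\in\C_\alpha(K,F)$, Theorem~B (whose range $F$ is a \fr space) gives $L_f|_B\in\C_\alpha(B,F)$.

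For strong affinity I would use the definition through scalar functionals: it suffices to prove that $\tau\circ(L_f|_B)$ is strongly affine for every $\tau\in F^*$. By linearity of $\tau$ together with $L_f(\ep_x)=f(x)$ one has $\tau\circ L_f=L_{\tau\circ f}$, and $h:=\tau\circ f$ is a scalar strongly affine function lying in $\C_\alpha(X,\ef)$; thus the problem reduces to showing that $L_h|_B$ is strongly affine. Fix a Radon probability $\nu$ on $B$. Because $\psi$ is a continuous surjection, the push-forward $\psi_*\colon\M^1(K)\to\M^1(B)$ is onto---its range is weak$^*$-compact, convex and contains every Dirac measure, hence equals $\M^1(B)$---so pick $\lambda\in\M^1(K)$ with $\psi_*\lambda=\nu$. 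For $i=1,2$ let $\mu_i=(\pi_i)_*(a_i\lambda)$ be the image under $\pi_i$ of the measure with density $a_i$ with respect to $\lambda$; these are nonnegative Radon measures on $X$ of masses $c_i=\int_K a_i\,\di\lambda$, and writing $y_i$ for the barycenter of $\mu_i/c_i$ one computes, for every $u\in\fra(X,\ef)$, that $r(\nu)(u)=\int_B\eta(u)\,\di\nu(\eta)=\int_K(a_1u(x_1)-a_2u(x_2))\,\di\lambda=c_1u(y_1)-c_2u(y_2)$. Hence $r(\nu)=c_1\ep_{y_1}-c_2\ep_{y_2}$ and $L_h(r(\nu))=c_1h(y_1)-c_2h(y_2)$. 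Strong affinity of $h$ gives $\int_X h\,\di\mu_i=c_ih(y_i)$, and (as $L_h|_B$ is Borel measurable by the first part applied to $h$) the image-measure formula gives $\int_B L_h\,\di\nu=\int_K(L_h\circ\psi)\,\di\lambda=\int_X h\,\di\mu_1-\int_X h\,\di\mu_2=c_1h(y_1)-c_2h(y_2)$. Comparing the two, $\int_B L_h\,\di\nu=L_h(r(\nu))$, which is exactly strong affinity of $L_h|_B$; letting $\tau$ range over $F^*$ settles the vector case.

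The heart of the matter, and the step I expect to be the main obstacle, is this strong-affinity computation: identifying the barycenter $r(\nu)\in B$ with the explicit functional $c_1\ep_{y_1}-c_2\ep_{y_2}$ built from barycenters $y_i$ on $X$. This is precisely what transports the strong affinity of $h$---a property of measures on $X$---into a statement about measures on $B$. One must also secure the lifting $\psi_*\lambda=\nu$ and the relevant integrability; the latter is automatic, because strong affinity forces $h$ to be integrable against every Radon probability on $X$, hence against each $\mu_i$, and correspondingly $L_h\circ\psi$ against $\lambda$, so the image-measure formula applies even when $h$ is unbounded. The complex case $\ef=\ce$ is entirely analogous: one uses the four-point representation of Lemma~\ref{L:dualcomplex}(i), the corresponding compact parameter space, and four auxiliary nonnegative measures $\mu_1,\dots,\mu_4$ on $X$.
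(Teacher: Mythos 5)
Your proposal is correct, but it takes a genuinely different route from the paper's. The paper factors everything through the ball of the measure space $\M(X,\ef)=\C(X,\ef)^*$: it introduces the Pettis-integral operator $g\mapsto\tilde g$, $\tilde g(\mu)=\int_X g\di\mu$, proves by transfinite induction (leaning on the vector-valued convergence theorem \cite[Theorem 3.7]{l1pred}) that $\tilde g\in\fra_\alpha(B_{\M(X,\ef)},F)$ for bounded $g\in\C_\alpha(X,F)$, uses strong affinity of $f$ exactly once to identify $\tilde f=(L_f\circ\pi)\r_{B_{\M(X,\ef)}}$ where $\pi$ is the restriction map, and then obtains the Baire class of $L_f\r_{B_{\fra(X,\ef)^*}}$ from Theorem B applied to the surjection $\pi$ and its strong affinity from \cite[Proposition 5.29]{lmns} together with scalarization. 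You instead transfer along the elementary parametrizing surjection $\psi\colon K\to B:=B_{\fra(X,\ef)^*}$ from the proof of Lemma~\ref{L:linspoj}(iii), which avoids vector integration entirely: the Baire-class half reduces to the routine fact that $(x_1,x_2,a_1,a_2)\mapsto a_1f(x_1)-a_2f(x_2)$ lies in $\C_\alpha(K,F)$ plus Theorem B, and the strong-affinity half is a hands-on computation --- lifting $\nu\in\M^1(B)$ to $\lambda\in\M^1(K)$ (your compactness-and-Diracs argument for surjectivity of the push-forward is sound) and identifying $r(\nu)=c_1\ep_{y_1}-c_2\ep_{y_2}$ --- which correctly replaces the paper's appeal to \cite[Proposition 5.29]{lmns}; the scalarization step $\tau\circ L_f=L_{\tau\circ f}$ is justified by the uniqueness clause of Lemma~\ref{L:linearizace}, and the degenerate cases $c_i=0$ are trivially absorbed. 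Integrability is in fact even easier than you suggest, since strongly affine maps into \fr spaces are bounded (\cite[Lemma 4.1]{l1pred}, which the paper uses), so $h$ and $L_h\r_B$ are bounded Baire functions and the image-measure formula applies without any care about unboundedness. The one point you should make explicit is the complex case: with the representation of Lemma~\ref{L:dualcomplex}(i) the natural parameter domain $a_1+a_2+a_3+a_4\le2$ makes $\psi$ map into $2B_{\fra(X,\ce)^*}$ rather than exactly onto $B_{\fra(X,\ce)^*}$, so before invoking Theorem B and the measure lifting you must restrict $\psi$ to the compact set $\psi^{-1}(B_{\fra(X,\ce)^*})$ (restrictions preserve membership in $\C_\alpha$, so nothing is lost); with that one-line patch the complex case goes through as you claim. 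On balance, your argument is more self-contained than the paper's --- it needs neither the Pettis-integral machinery of \cite[Lemmas 3.5--3.7]{l1pred} nor \cite[Proposition 5.29]{lmns} --- whereas the paper's route yields as a by-product the stronger information that the induced map $\tilde f$ on the measure ball belongs to the \emph{affine} class $\fra_\alpha(B_{\M(X,\ef)},F)$.
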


\begin{proof} Let us denote by $\M(X,\ef)$ the space of all $\ef$-valued Radon measures on $X$ considered as the dual space to $\C(X,\ef)$ equipped with the weak$^*$ topology. For any bounded Baire function $g\colon X\to F$ let us define a mapping $\tilde g:B_{\M(X,\ef)}\to F$ by setting $\tilde g(\mu)=\int_X g\di\mu$. The integral is considered in the Pettis sense, see \cite[Section 1.3]{l1pred}. The mapping $\tilde g$ is well defined by \cite[Lemma 3.6]{l1pred}.

Moreover, $\tilde g\in\fra_\alpha(B_{\M(X,\ef)},F)$ whenever $g\in\C_\alpha(X,F)$. We will prove it by transfinite induction on $\alpha$.
Let $\alpha=0$, i.e., let $g$ be continuous. Then for each $\tau\in F^*$ the function $\tau\circ g$ is continuous, thus
the mapping
$$\mu\mapsto \tau(\tilde g(\mu))=\int_X \tau\circ g\di\mu,\quad \mu\in B_{\M(X,\ef)},$$
is continuous by the very definition of the weak$^*$ topology. Thus $\tilde g$ is continuous from $B_{\M(X,\ef)}$ to the weak topology of $F$. Further, the range of $\tilde g$ is contained in the closed absolutely convex hull of $g(X)$ (by \cite[Lemma 3.5(c)]{l1pred}). Since $g(X)$ is compact, we deduce that $\tilde g(X)$ is relatively compact  by \cite[Proposition 6.7.2]{jarchow}. Hence the weak topology coincides with the original one on $\tilde g(X)$, so $\tilde g$ is continuous.
Finally, it is clear that $\tilde g$ is affine.

Let $\alpha\in(0,\omega_1)$ be such that the statement hold for each $\beta<\omega_1$. Let $g\in\C_\alpha(X,F)$ be a bounded function. Then there is a uniformly bounded sequence $(g_n)$ in $\bigcup_{\beta<\alpha} \C_\beta(X,F)$ pointwise converging to $g$. Indeed, by \cite[Lemma 3.1(c)]{l1pred} we get that $g\in\C_\alpha(X,\co g(X))$ and the set $\co g(X)$ is bounded.
Now it follows from \cite[Theorem 3.7]{l1pred} that $\tilde{g_n}\to \tilde g$ pointwise. Since by the induction hypothesis we have
$\tilde{g_n}\in\bigcup_{\beta<\alpha} \fra_\beta(B_{\M(X,\ef)},F)$, we conclude $\tilde g\in \fra_\alpha(B_{\M(X,\ef)},F)$.

We return to the proof of the lemma. Let $X,F,f,\alpha$ be as in the statement. Since $f$ is bounded by \cite[Lemma 4.1]{l1pred}, we get $\tilde f\in\fra_\alpha(B_{\M(X,\ef)},F)$. Further, let $\pi:\M(X,\ef)\to\fra(X,\ef)^*$ be the restriction map (recall that $\M(X,\ef)$ is identified with $\C(X,\ef)^*$). We claim that
\begin{equation}\label{eq:sloz}
\tilde f=(L_f \circ \pi)\r_{B_{\M(X,\ef)}}.
\end{equation}
Since the mappings on both sides are restrictions of linear operators, it is enough to check that they agree on probability measures. Let $\mu\in\M^1(X)$. Then $\pi(\mu)=\ep_{r(\mu)}$, hence
$$L_f(\pi(\mu))=L_f(\ep_{r(\mu)})=f(r(\mu))=\int f\di\mu=\tilde f(\mu),$$
where we used the definitions and the assumption that $f$ is strongly affine.

Finally, $\pi$ is a continuous affine surjection. Since $\tilde f\in\fra_\alpha(B_{\M(X,\ef)},F)$, by Theorem B we conclude that $L_f\r_{(B_{\fra(X,\ef)^*},w^*)}\in \C_\alpha((B_{\fra(X,\ef)^*},w^*),F)$ and  $L_f\r_{(B_{\fra(X,\ef)^*},w^*)}$ is strongly affine by \cite[Proposition 5.29]{lmns} and \cite[Fact 1.2]{l1pred}.
\end{proof}

\begin{proof}[Proof of Theorem~\ref{T:appl}.]
(a) Let $E_1$ be an $L_1$-predual, $E$ a Banach space isomorphic to a complemented subspace of $E_1$ and $X=(B_{E^*},w^*)$.
Further, let $F$ be a \fr space, $\alpha<\omega_1$ and $f\in\C_\alpha(X,F)$ a strongly affine function. Fix $T:E\to E_1$ an isomorphism of $E$ onto a complemented subspace of $E_1$. Denote $E_2=T(E)$ and let $\pi:E_1^*\to E_2^*$ denote the restriction mapping.

Let $g=\tilde T f$ (see Lemma~\ref{L:isom}). Then $g$ is strongly affine and $g\in\C_\alpha((B_{E_2^*},w^*),F)$ (by Lemma~\ref{L:isom}). Hence $g\circ\pi$ is a strongly affine function in  $\C_\alpha((B_{E_1^*},w^*),F)$.
By \cite[Theorem 2.5]{l1pred} we get $g\circ\pi\in\fra_{1+\alpha}((B_{E_1^*},w^*),F)$. Therefore, by Lemma~\ref{L:complem} and Theorem~\ref{T:main} we get $g\in\fra_{1+\alpha}((B_{E_2^*},w^*),F)$. By Lemma~\ref{L:isom} we conclude that $f\in\fra_{1+\alpha}(X,F)$.

If $\ext B_{E_1^*}$ is moreover a weak$^*$ $F_\sigma$-set, from \cite[Theorem 2.5]{l1pred} we obtain
$g\circ\pi\in\fra_{\alpha}((B_{E_1^*},w^*),F)$,
therefore we conclude $f\in\fra_{\alpha}(X,F)$.

(b) Let us start by the real case. Hence, let $E_1$ be a real $L_1$-predual, $X$ a compact convex space, $T\colon\fra(X)\to E_1$
an isomorphism onto a complemented subspace. Let $F$ be a \fr space and $f\colon X\to F$ a strongly affine function which belongs to $\C_\alpha(X,F)$. Without loss of generality we can suppose that $F$ is real (if $F$ is complex, we can forget
complex multiplication and consider $F$ as a real space). By Lemma~\ref{L:sa} the mapping $L_f\r_{B_{\fra(X,\ef)^*}}$ is strongly affine and belongs to
$\C_\alpha((B_{\fra(X,\ef)^*},w^*),F)$. Therefore by the already proved case (a) we get $L_f\r_{B_{\fra(X,\ef)^*}}\in\fra_{1+\alpha}((B_{\fra(X,\ef)^*},w^*),F)$, hence $f\in\fra_{1+\alpha}(X,F)$.

If $\ef=\ce$ and $F$ is complex, the proof can be done in the same way as in the real case. 	

Finally, suppose that $\ef=\ce$ and $F$ is real. Let $F_{\ce}$ be the complexification of $F$. By the previous case
we get $f\in\fra_{1+\alpha}(X,F_{\ce})$. Since the canonical projection of $F_{\ce}$ onto $F$ is real-linear and continuous,
we get $f\in\fra_{1+\alpha}(X,F)$.

If $\ext B_{E_1^*}$ is weak$^*$ $F_\sigma$, $1+\alpha$ can be replaced by $\alpha$ as  in the case (a).
\end{proof}


\end{document}